\documentclass[12pt]{article}

\usepackage{amsmath,amsthm,amssymb}
\usepackage{amscd,amsmath, amssymb, fancyhdr, url,epsfig, color, enumitem, tocloft}
\usepackage{epsfig,bbm}
\numberwithin{equation}{section}

\usepackage{tikz-cd}
\usepackage[all]{xy}
\usepackage{mathtext}
\usepackage[T1,T2A]{fontenc}
\usepackage[utf8]{inputenc}
\usepackage[russian,english]{babel}
\usepackage{amsfonts,longtable,amssymb,amsmath,latexsym, dsfont}
\usepackage{wrapfig}
\usepackage{xypic}
\usepackage{mathtools}
\usepackage{hyperref}

\title{Selfsimilar Hessian manifolds}
\author{Pavel Osipov\footnote{National Research University Higher School of Economics, Russian Federation.} \footnote{Pavel Osipov is 
		partially supported by the HSE University Basic
		Research Program, Simons Foundation, and by the contest “Young Russian
		Mathematics”.} 
}

\usepackage{amsthm, amsmath, amssymb, amsfonts, amscd, amscd, geometry}
\newsavebox{\ssa}
\geometry{left=1.4cm}
\geometry{right=1.5cm}
\geometry{top=1cm}
\geometry{bottom=2cm}

\renewcommand{\d}{\partial}
\newcommand{\e}{\varepsilon}

\newcommand{\dr}{\frac{\partial}{\partial t}}
\renewcommand{\d}{\partial}

\newcommand{\ds}{\frac{\partial}{\partial s}}

\newcommand{\R}{\mathbb R}
\renewcommand{\L}{\mathcal{L}}

\newcommand{\dxi}{\frac{\partial}{\partial x^i}}
\renewcommand{\dr}{\frac{\d}{\d r}}

\theoremstyle{definition}
\newtheorem{theorem}{Theorem}[section]
\newtheorem{lemma}[theorem]{Lemma}
\newtheorem{proposition}[theorem]{Proposition}

\newtheorem{cor}[theorem]{Corollary}
\newtheorem{defin}[theorem]{Definition}
\newtheorem{example}[theorem]{Example}

\renewcommand{\v}{\varphi}
\theoremstyle{remark}
\newtheorem{rem}{Remark}[section]
\makeatletter
% Счетчик формул
\makeatother
\topmargin=-15mm \oddsidemargin=-5mm \textheight=240mm \textwidth=170mm
\hoffset 2mm %влево всю страницу

\usepackage{indentfirst}

\pagestyle{empty}
\pagestyle{plain}
\setcounter{page}{1}

\begin{document}
	
		\maketitle
		
		\begin{abstract}
		A selfsimiar manifold is a Riemannian manifold $\left(M,g\right)$ endowed with a homothetic vector field $\xi$. We characterize global selfsimilar manifolds and describe the structure of local selfsimilar manifolds. We prove that any selfsimilar manifold with a potential homothetic vector field is a conical Riemannian manifold or a Eucledean space. A radiant Hessian manifold is selfsimilar Hessian manifold $\left(M,\nabla,g,\xi\right)$ such that $\nabla\xi=\lambda \text{Id}$. We prove that any selfsimilar Hessian manifold with a potential homothetic vector field is locally isomorphic to a product radiant Hessian manifolds and describe the local structure of radiant selfsimialar Hessian manifolds. 
		\end{abstract}
	
	\tableofcontents

	\section{Introduction}
	
	We study Riemannian manifolds endowed with a homothetic vector fields. Precisely, a {\bfseries selfsimilar manifold} is a Riemannian manifold $(M,g)$ endowed with a vector field $\xi$ satisfying $\L_\xi g =2g$. Moreover, if the field $\xi$ is complete then we say that $\left(M,g,\xi\right)$ is a {\bfseries globally selfsimilar manifold}.  
	
	{\bfseries Riemannian cones} $\left(\hat M=M\times\R^{>0},\hat g=s^2 g_M+ds^2,\hat \xi=s\ds\right)$ are example of selfsimilar manifolds. Riemannian cones have important applications in supegravity (\cite{ACDM}, 
	\cite{ACM}, \cite{CDM}, \cite{VDMV}). It is well-known that a geodesically complete selfsimilar manifold is isometric to a Euclidean space. If the holonomy group of a Riemannian cone $\left(\hat M,\hat g,\hat \xi\right)$ is decomposable then $(M,g)$ has a constant curvature 1 (\cite{Ga}). Holonomy and geometry of pseudo-Riemannian cones are studied in \cite{ACGL} and \cite{ACL}. Selfsimilar Lorentzian manifolds are characterized in \cite{A}. A {\bfseries  conical Riemannian manifold} is a Riemannian manifold endowed with a vector field $\xi$ satisfying $D \xi=\text{Id}$, where $D$ is a Levi-Civita connection. Any conical Riemannian manifold is locally isometric to a Riemannian cone. Hence, any conical Riemannian manifold is a selfsimilar manifold.

		In Subsection \ref{s21}, we describe global selfsimilar manifolds. In Subsection \ref{s22}, we prove that any selfsimilar manifold is locally isomorphic to a global selfsimilar manifold. The results of subsections \ref{s21} and \ref{s22} are summarized by the following theorem. 
		
		\begin{theorem}\label{T_1}
			Any global selfsimilar manifold $(C,g,\xi)$ is isomorphic to one of the following: 
			\begin{itemize} 
				\item [(i)] $\left(\R^n, \sum_{i=1}^n\left(dx^i\right)^2,\rho+\eta\right)$,
				where $a\in \R$, $\rho=\sum_{i=1}^n x^i \dxi$ is a radiant vector field and $\eta\in \text{so}(n)$ is a Killing vector field.
				\item  [(ii)] $\left(\hat M=M\times \R^{>0},\hat g=s^2 g_M +s ds \cdot \alpha +  ds^2,s\ds \right)$, where
				$s$ is a coordinate on $\R^{>0}$, $g_M$ is a Riemannian metric on $M$, $\alpha$ a 1-form on $M$, and 
				$$
				g_M(X,X)+2\alpha(X)+ 1>0, \ \ \ \text{for any} \ \ \ X\in TM.
				$$
				The identification $(C,g,\xi)\simeq \left(\hat M,\hat g, s\ds\right)$ is defined by 
				$$
				s= \sqrt{g(\xi,\xi_)}, \ \ \ c \  =  \  \left(\gamma_c\cap \{s=1\}\right)\times s\  \in  \  M\times \R^{>0}, \ \ \ g_M= g|_{M\times 1}, \ \ \ \alpha = \iota_\xi g|_{M\times 1},
				$$ 
				where $c\in C$ and $\gamma_c$ is the integral curve of $\xi$ containing $c$.
			\end{itemize} 
			Any selfsimilar manifold is locally isomorphic to a global selfsimilar manifold. 
		\end{theorem}
		A selfsimilar manifold can be locally isomorphic to a global selfsimilar manifold from item (i) at a neighborhood of a point and isomorphic to one from item (ii) at a neighborhood of another point (see Example \ref{example with to parts}).

	We say that $(C,g,\xi)$ is a selfsimilar manifold with a {\bfseries potential homothetic vector field} if $\xi$ is locally defined as a gradient of a function. Denote If $\xi=\text{grad} \ f$ on a domain $U$ then $\iota_\xi g=0|_U=df$. Moreover, a form is closed if and only if it is locally exact. Therefore, the field $\xi$ is potential if and only if $d\iota_\xi g=0$.
	
		A Riemannian cone and a Euclidean space with a radiant vector field are selfsimilar manifolds with a potential vector field (see Example \ref{E1} and Example \ref{E2}). Actually, any global selfsimilar manifold with a radiant vector field belongs to examples above and these two examples describe the local geometry of selfimilar manifolds with potential vector fields. 
		
	The following theorem implies that any selfsimilar manifold with a potential homothetic vector field is locally isometric to a Eucledean space or a Riemannian cone.

\begin{theorem}\label{T1}\ 
	Let $\left(M,g,\xi\right)$ is a global selfsimilar manifold with a potential homothetic vector field.
	\begin{itemize} 
		\item [(i)] If $\xi$ vanishes at a point then $\left(M,g,\xi\right)$ is Euclidean space with a radiant vector field $\left(\R^n,\sum_{i=1}^n\left(dx^i\right)^2,\sum x^i\dxi\right)$.  
		\item [(ii)] If $\xi$ does not vanishes at any point then $\left(M,g,\xi\right)$ is a Riemannian cone $\left(\hat M,\hat g,\hat \xi\right)$. 
	\end{itemize}
	
\end{theorem}

If a global selfsimilar manifold $(C,g,\xi)$ is isometric to a Riemannian cone then the identification $C\simeq M\times\R^{>0}$ is defined as in Theorem \ref{T_1}.

In subsection \ref{s24}, we describe an example of a selfsimilar manifold without any potential homothetic vector field.

	Further, we work with Hessian manifolds. A {\bfseries flat affine manifold} is a differentiable manifold equipped with a flat torsion-free connection. Equivalently, it is a manifold equipped with an atlas such that all translation maps between charts are affine transformations (see \cite{FGH} or \cite{shima}). 
	A {\bfseries Hessian manifold} is a flat affine manifold with a Riemannian metric wich is locally equivalent to a Hessian of a function. 
		
	Hessian manifolds have many different application: in supersymmetry (\cite{CMMS}, \cite{CM}, \cite{AC}), in convex programming
	(\cite{N}, \cite{NN}), in the Monge-Ampère Equation (\cite{F1}, \cite{F2}, \cite{G}), in the WDVV equations (\cite{T}).
	
		A {\bfseries selfsimilar Hessian manifold} $(C,\nabla, g, \xi)$ is a Hessian manifold $(C,\nabla,g)$ endowed with a vector field $\xi$ such that $(C,g,\xi)$ is a selfsimilar manifold and the flow along $\xi$ preserves $\nabla$. If $\xi$ is complete then $(C,g,\xi)$ is called a {\bfseries global selfsimilar Hessian manifold}.
		
	A {\bfseries radiant manifold} $(C,\nabla, \rho)$ is a flat affine manifold $(C,\nabla)$ endowed with a {\bfseries radiant vector field} i.e. a field $\rho$ satisfying
	$$
	\nabla \rho =\text{Id}.
	$$
	Equivalently, it is a manifold equipped with an atlas such that all translation maps between charts are linear transformations (see e.g. \cite{Go}). In the corresponding flat affine coordinates we have 
	$$
	\rho=\sum x_i \dxi.
	$$

		We call a selfsimilar Hessian manifold $(C,\nabla, g, \xi)$ a {\bfseries radiant Hessian manifold} if and only if there exists a radiant vector field $\rho$ on $C$ and a constant $\lambda\ne 0,2$ such that $\xi=\lambda\rho$. There exists a radiant Hessian manifold with any $\lambda\ne 0,2$ (Corollary \ref{cor}).
		
		\begin{rem}
			The case $\lambda=2$ is studied in \cite{G-A}. In this case, the Hessian of a function $g=\text{Hess} \ \varphi$ satisfies 
			\begin{equation}\label{2}
			\iota_\xi g = 0,
			\end{equation}
			i.e. if $\lambda=2$ then $g$ can not be positive definite. A radiant affine manifold $\left(C,\nabla,\rho\right)$ endowed with a degenerate Hessian metric $g$ satisfying $\eqref{2}$ is called an {\bfseries extensive Hessian manifold}.  Extensive Hessian manifolds describe sets of states in equilibrium thermodynamics (see \cite{G-A} and \cite{W1}). In these models, coordinates are observables such as volumes, particle numbers, the internal energy or the entropy (we can choose only one of the last two observables for the independence of coordinates). Equation \eqref{2} is a coordinate-free form of the Gibbs-Duhem equation from thermodynamics (see e.g. \cite{W}). Equation \eqref{2} is equivalent to the existence of an Hessian potential $\varphi$ satisfying
			\begin{equation}\label{12}
			\L_\rho \varphi=\varphi  \ \ \ \Longleftrightarrow  \ \ \ \forall a\in \R^{>0}: \varphi(ax^1,\ldots,ax^n)=a\varphi (x^1,\ldots,x^n),
			\end{equation}
			(see \cite{G-A}). In equilibrium thermodynamics, $\varphi$ is either the entropy or the internal energy. In the first case, $g=\text{Hess} \varphi$ is called Ruppeiner metric (\cite{R1}, \cite{R2}) and, in the second case, Weinhold metric (\cite{W1}, \cite{W}). Entropy and internal energy are additive when systems are merged hence satisfy \eqref{12}.

		\end{rem}
		\begin{example}\label{EE}
			Let $\left(C_i,\nabla_i, g_i, \xi_i\right)$, be a collection of Hessian radiant manifolds, where ${1\le i\le k}$. Then $\left(\prod C_i,\bigoplus\nabla_i,\bigoplus g_i, \bigoplus\xi_i\right)$ is a selfsimilar Hessian manifold with a potential homothetic vector field (see Theorem \ref{l39}).
		\end{example}
	
		Actually any selfsimilar Hessian manifold is locally isomorphic to one from Example \ref{EE}.
		
		\begin{theorem} \label{l39}
			Let $\left(C,\nabla,\xi\right)$ be a selfsimilar Hessian manifold. Then $\xi$ is potential if and only if $\left(C,\nabla,\xi\right)$ is locally isomorphic to a direct product of radiant Hessian manifolds. Moreover, if the field $\xi$ is potential and vanishing at a point then $\left(C,\nabla,g,\xi\right)$ is a radiant Hessian manifold with a radiant vector field $\rho=\xi$. 
		\end{theorem}
	
	In Subsection \ref{s32}, we describe the local structure of radiant Hessian manifolds with a potential vector field. By Theorem \ref{l39}, this description provides a local structure of any selfsimilar manifold Hessian manifolds with a potential homothetic vector field.

\begin{theorem}\label{T4}
	Let $(C,\nabla,g,\xi)$ be an $n+1$-dimensional radiant selfsimilar Hessian manifold and $p\in C$.
	\begin{itemize}
		\item [(i)] Let $\xi_p=0$. Then the exists a flat local coordinate system $\left(x^1,\ldots x^{n+1}\right)$ at a neighborhood of $p$ such that 
		$$
		g=\sum_{i=1}^{n+1} \left(dx^i\right)^2 \ \ \text{and} \ \ 
		\xi=\sum_{i=1}^{n+1}\left(x^i\dxi\right).
		$$
		\item [(ii)]  Let $\xi_p \ne 0$. Then the exists a flat local coordinate system $\left(x^1,\ldots x^{n+1}\right)$ at a neighborhood of $p$ such that 
		$$
		x^{n+1}>0, \ \ \ \xi=\lambda\sum_{i=1}^{n+1}x^i\dxi \ \ \text{and} \ \ 
		g=\text{Hess} \left(\left(x^{n+1}\right)^{2\lambda^{-1}}\psi\right),
		$$ 
		where $\psi$ is a function constant along the radiant vector field such that
		\begin{gather}
				  \left(4\lambda^{-2}-2\lambda^{-1}\right)\psi>0  \label{1},\\   X^2  \psi>\max\left(\left(2-4\lambda^{-1}\right)X\left(\psi\right)-\left(4\lambda^{-2}-2\lambda^{-1}\right)\psi ,0 \right), \label{101}
		\end{gather}
		for any nonzero flat vector field $X=\sum_{i=1}^n b_i \dxi$, $b_i\in \R$.
		
	\end{itemize} 
\end{theorem}
	 
	 The system of inequalities \ref{1} and \ref{101} describes the condition for the metric $g=\text{Hess} \left(\left(x^{n+1}\right)^{2\lambda^{-1}}\psi\right)$ to be positive definite. Concretely,
	 $$
	 \begin{aligned}	
	 	g(\rho,\rho)>0 \ \ \  &\text{if and only if} \ \ \ \left(4\lambda^{-2}-2\lambda^{-1}\right)\psi>0, \\ 
	 	{g(X+\rho,X+\rho)>0}  \ \ \ &\text{if and only if}  \ \ \ X^2  \psi>\left(2-4\lambda^{-1}\right)X\left(\psi\right)-\left(4\lambda^{-2}-2\lambda^{-1}\right)\psi, \\ 
	 	g(X,X)>0 \ \ \ &\text{ if and only if} \ \ \ X^2(\psi)>0.
	 \end{aligned}
 	$$
 	Condition \ref{1} uniquely defines the sign of $\psi$ by $\lambda$. If $\lambda=2$, then condition \ref{1} cannot be satisfied. One can think of \ref{101} as a way to tell that the function $\psi$ is "strongly convex".

	\section{Selfsimilar manifolds}
	
	\begin{defin}
		A {\bfseries selfsimilar manifold} $(C,g,\xi)$ is a Riemannian manifold $(C,g)$ endowed with a field $\xi$ satisfying  
		$$
		\L_\xi  g = 2g.
		$$
		If $\xi$ is complete then the manifold is called a {\bfseries global selfsimilar manifold}.
	\end{defin}

	It follows from the definition that a global selfsimilar manifold is a Riemannian manifold endowed with a 1-parameter group of homothetic automorphims $\{\varphi_t\}$ such that $\varphi_t^* g= e^{2t} g$. The term "selfsimilar" is motivated by the fact that for any $\lambda\in \R^{>0}$ a global selfsimilar manifold $(C,g)$ is isometric to $(C,\lambda g)$. 

	\begin{example}
		Let $\left(\hat M=M\times\R^{>0},\hat g=s^2g_M+ ds^2\right)$ be a {\bfseries Riemannian cone} and $\xi=s\frac{\d}{\d s}$. Then $\left(\hat M,\hat g,\xi\right)$ is a globally selfsimilar manifold.
	\end{example}
	
	\begin{example}
		A {\bfseries conical Riemannian manifold} $(C,g,\xi)$ is a Riemannian manifold $(C,g)$ endowed with a nowhere vanishing vector field $\xi$ satisfying 
		$$
		D\xi =\text{Id},
		$$
		where $D$ is the Levi-Civita connection. The collection $(C,g,\xi)$ is locally isomorphic to a Riemannian cone ${\left(M\times\R^{>0},s^2g_M+ds^2,s\ds\right)}$ (see \cite{ACHK}). Therefore, any conical Riemannian manifold is a selfsimilar manifold. 
	\end{example}
\subsection{Global selfsimilar manifolds} \label{s21}
\subsubsection{The case of a nowhere vanishing homothetic vector field}\label{s211}
We will denote the symmetric product of tensors $\alpha$ and $\beta$ by $\alpha\cdot\beta$.

\begin{lemma}\label{L1}
	Let $s$ be a coordinate on $\R^{>0}$, $g_M$ a Riemannian metric on $M$, $\alpha$ a 1-form on $M$, and $f$ a positive definite function on $M$. Then the bilinear form
	$$
	g=s^2 g_M +s ds \cdot \alpha + f ds^2
	$$
	is positive if and only if  
	$$
	g_M(X,X)+2\alpha(X)+ f>0, \ \ \ \text{for any} \ \ \ X\in TM.
	$$
\end{lemma}
\begin{proof}
	
	Any tangent vector admits a form $as\ds$, $X$, or $as\ds+aX$, where $X\in TM$ and $a\in \R$. Since $f$ is positive definite, 
	$$
	g\left(as\ds,as\ds\right)=a^2s^2 f>0.
	$$
	Moreover,
	$$
	g(X,X)=s^2g_M(X,X)>0.
	$$
	Finally, we have
	$$
	g\left(as\ds+aX,as\ds+aX\right)=a^2s^2\left(g_M(X,X)+2\alpha(X)+ f\right).
	$$
	That is, 
	$$
	g\left(as\ds+aX,as\ds+aX\right)>0 \ \ \ \text{if and only if} \ \ \  g_M(X,X)+2\alpha(X)+ f>0.
	$$ Thus, 
	$$
	g=s^2 g_M +s ds \cdot \alpha + f ds^2
	$$
	is positive if and only if  
	$$
	g_M(X,X)+2\alpha(X)+ f>0, \ \ \ \text{for any} \ \ \ X\in TM.
	$$
\end{proof}

\begin{proposition}\label{p1}
	Let $g$ be a symmetric bilinear form on $M\times\R^{>0}$. Then $(M\times\R^{>0}, g,s\ds)$ is a selfsimilar manifold if and only if we have
	$$
	g=s^2 g_M +s ds \cdot \alpha + f ds^2,
	$$
	\begin{equation}\label{2.1}
	g_M(X,X)+2\alpha(X)+ f>0, \ \ \ \text{for any} \ \ \ X\in TM,
	\end{equation}
	where $s$ is a coordinate on $\R^{>0}$, $g_M$ a Riemannian metric on $M$, $\alpha$ a 1-form on $M$, and $f$ a positive definite function on $M$.
	
\end{proposition}

\begin{proof}
	The bilinear form 
	$
	g=s^2 g_M +s ds \cdot \alpha + f ds^2
	$
	satisfies $\L_{s\ds} g = 2g$. According to Lemma \ref{L1}, $g$ is positive definite if and only if the condition \eqref{2.1} holds. 
	
	Let $\left(M\times\R^{>0}, g,s\ds\right)$ be a selfsimilar manifold.  The condition $\L_{s\ds} g =2g$ is equivalent to the condition $\lambda_t g = t^2g$, where the map $\lambda_t: M\times \R^{>0} \to M \times \R^{>0}$ defined by the rule  $\lambda_t (x\times s)=x\times ts$. Let $\pi: M\times\R^{>0}\to M$ be the projection. Then the metric $g$ lies in 
	$$
	\text{Sym}^2 \left(\pi^*TM \oplus \ker \pi\right)^* =\text{Sym}^2 \left(\pi^*TM\right)^* \oplus \text{Sym}^2 \left(\pi^*TM \otimes \ker \pi \right)^* \oplus \text{Sym}^2 \left(\ker \pi \right)^*.
	$$

	Let $\pi_{2, 0}$, $\pi_{1,1}$, and $\pi_{0,2}$ be the projections of $\text{Sym}^2 \left(\pi^*TM \oplus \ker \pi\right)^*$ on the three summands from the decomposition above. Define $g|_{M\times 1} =g_M$. For any $X_1, X_2 \in \pi^*TM$ we have
	$$
	g(X_1, X_2)(m\times s)=\lambda_s^* g(X_1, X_2)(m\times 1)=s^2 g(X_1, X_2)(m\times 1).
	$$
	Thus,
	\begin{equation}\label{2.2}
	\pi_{2,0} g= s^2 g_M.
	\end{equation}
	
	For any $X \in \pi^*TM$ and $\frac{\partial}{\partial s} \in \ker \pi$, since 
	$$
	{\lambda_s}_* \left( s^{-1}\frac{\partial}{\partial s}\right) =\frac{\partial}{\partial s},
	$$ 
	we have
	\begin{equation}\label{2.3}
	g\left(X, \frac{\partial}{\partial s}\right)(m\times s)=\lambda_s^* g\left(X, s^{-1}\frac{\partial}{\partial s}\right)(m\times 1)=s g\left(X,\frac{\partial}{\partial s} \right)(m\times 1). 
	\end{equation}
	Define
	$$
	\alpha := \left.\left(\iota _{\frac{\partial}{\partial s}} g\right)\right|_{M\times 1}.
	$$
	Then, by \eqref{2.3}, we get
	$$
	g\left(X, \frac{\partial}{\partial s}\right)(m\times s)=s\alpha (X)=
	s ds \cdot\alpha \left(X, \frac{\partial}{\partial s}\right).
	$$
	Hence, 
	\begin{equation}\label{2.4}
	\pi_{1,1} g =s ds \cdot\alpha ,
	\end{equation}
	
	Finally, we have
	$$
	g\left(\frac{\partial}{\partial s},\frac{\partial}{\partial s}\right)(m,s)=\lambda_s^* 	g\left(s^{-1}\frac{\partial}{\partial s},s^{-1}\frac{\partial}{\partial s}\right)(m,1)=	g\left(\frac{\partial}{\partial s},\frac{\partial}{\partial s}\right)(m,1).
	$$
	Therefore, we get
	\begin{equation}\label{2.5}
	\pi_{0,2} g= f ds^2,
	\end{equation}
	where the function 
	$$
	f(m)=g\left(\frac{\partial}{\partial s},\frac{\partial}{\partial s}\right)(m,1)
	$$ is a function on $M$.
	
	Combining \eqref{2.2}, \eqref{2.4}, and \eqref{2.5}, we get 
	$$
	g=s^2 g_M +s  ds \cdot \alpha + f ds^2.
	$$
	By Lemma \ref{L1}, $g$ is positive definite if and only if the condition \eqref{2.1} holds.
\end{proof}
\begin{theorem}\label{global selfsimilar with nowhere vanishing}
	Any global selfsimilar manifold $(C,g,\xi)$ with a nowhere vanishing homothetic vector field is isomorphic to $$\left(\hat M=M\times \R^{>0},\hat g=s^2 g_M +s  ds \cdot \alpha +  ds^2,s\ds \right),$$ where
	$s$ is a coordinate on $\R^{>0}$, $g_M$ a Riemannian metric on $M$, $\alpha$ a 1-form on $M$, and 
	$$
	g_M(X,X)+2\alpha(X)+ 1>0, \ \ \ \text{for any} \ \ \ X\in TM.
	$$
	The identification $(C,g,\xi)\simeq \left(\hat M,\hat g, s\ds\right)$ is defined by 
	$$
	s= \sqrt{g(\xi,\xi_)}, \ \ \ c \  =  \  \left(\gamma_c\cap \{r=1\}\right)\times s\  \in  \  M\times \R^{>0}, \ \ \ g_M= g|_{M\times 1}, \ \ \ \alpha = \iota_\xi g|_{M\times 1},
	$$ 
	where $c\in C$ and $\gamma_c$ is the integral curve of $\xi$ containing $c$.
	
\end{theorem}

\begin{proof}
	The vector field $\xi$ does not vanish at any point of $C$. The Lie derivative of a metric is defined by 
	$$
	(\L_X g )(Y,Z)=\L_X (g(Y,Z))-g([X, Y], \ Z)-g(Y, \ [X, Z]).
	$$
	Therefore,
	$$
	\L_\xi \ (g(\xi, \xi))= (\L_\xi \ g)(\xi, \xi) = 2g(\xi, \xi)>0.
	$$
	Thus, the function $s:=|\xi|=\sqrt {g(\xi, \xi)}$ strictly increases along $\xi$. For any point $p\in C$ denote the integral curve of $\xi$ containing a point $p$ by $\gamma_p$. Since $\xi_p \ne 0$, the curve $\gamma_p$ is non-degenerated. Then any integral curve $\gamma_p$ is diffeomorphic to $\R$ because there exists a strictly increasing function along $\gamma$ ($s$ strictly increases along $
	\gamma$). Let $u_p$ be a coordinate on $\gamma_p$ such that 
	$$
	\frac{\partial}{\partial u_p}=\xi.
	$$  
	Then $s=|\xi|$ satisfies 
	$$
	\frac{\partial}{\partial u_p} s^2 = \L_\xi (g(\xi,\xi))=2g(\xi,\xi) = 2s^2.
	$$
	Therefore, 
	$$
	s=ae^{u_p},
	$$
	where $a\in \R$ is a constant. Moreover $a\in \R^{>0}$ because $s$ is positive defined. Then $s$ identifies $\gamma$ with $\R^{>0}$ and 
	$$
	\xi=\frac{\partial}{\partial u_p}=\frac{\partial}{\partial \ln (a^{-1}s)}=\left( \frac{\partial\ln (a^{-1}s)}{\partial s}\right)^{-1} \frac{\partial}{\partial s}=s\frac{\partial}{\partial s}.
	$$
	Denote by
	$$
	M:=\{p\in C|\ s(p)=1\}
	$$
	the level set which is a smooth submanifold of a codimension $1$. Since for any $p\in C$, the map $s|_{\gamma_p} : \gamma_p \to \R^{>0}$ is an isomorphism, $\gamma_p$ has an unique intersection with $M$. Define
	$$
	\alpha: C \to M \times \R^{>0}
	$$
	by the rule 
	$$
	\alpha(p)=(\gamma_p\cap M) \times s(p).
	$$
	For any $p\in C$ the restriction $s|_{\gamma_p}$ is an isomorphism.
	Moreover, $$C=\bigcup_{p\in C} \gamma_p=\bigcup_{p\in M} \gamma_p.$$ Therefore, $\alpha$ is an isomorphism.

	Thus, we have $C=M\times \R^{>0}$ and $\xi=s\frac{\partial}{\partial s}$, where $s$ is a coordinate on $M$. That is, $(M\times\R^{>0},g,s\ds)$ is a selfsimilar Riemannian manifold. Thus, $g$ satisfies the conditions of Proposition \ref{p1}.
	
\end{proof}

\subsubsection{The case of a vanishing at a point homothetic vector field}\label{s212}

\begin{lemma}\label{L2}
	Let $(C,g,\xi)$ be a global selfsimilar manifold and $\Phi_t=\exp(-t\xi)$ the flow along the vector field $-\xi$. Suppose that the field $\xi$ vanishes at a point $p\in C$. Then for any $q\in C$ we have
	$\underset{t\to\infty}\lim\Phi_t(q)=p.$
\end{lemma}

\begin{proof}
	The vector field $\xi$ vanishes at a point $p$. Let $q\in C$, $\gamma$ be a path between the points $p$ and $q$, $l$ the length of $\gamma$, $\Phi_t$ the flow along the field $\xi$ as above. For any $t\in \R^{>0}$,  $\Phi_t(\gamma)$ is a path between $\Phi_t(q)$ and $\Phi_t(p)=p$. Moreover, the length of  $\Phi_t(\gamma)$ equals $e^{-2t}l$. We have $\underset{t\to\infty}\lim e^{-2t}l=0$. Hence, $\underset{t\to\infty}\lim\Phi_t(q)=p$.
\end{proof}

\begin{proposition}\label{p2}
	Let $(C,g,\xi)$ be a global selfsimilar manifold and the homothetic field $\xi$ vanishes at a point $p$. Then $(C,g)$ is a flat Riemannian manifold. 
\end{proposition}

\begin{proof}
	Let $\Phi_t=\exp(-t\xi)$ be as in Lemma \ref{L2}, $\Theta$ the Riemannian curvature tensor,
	$$
	K(u,v)=\frac{g(\Theta_v(u,v),u)}{g(u,u)g(v,v)-(g(u,v))^2}
	$$ 
	the sectional curvature, and
	$$
	k(q)=\max_{u,v\in T_q M} \left|K(u,v)\right|.
	$$
	The latter is defined because $K(u,v)$ depends only on directions of $u$ and $v$ but not on the lengths.
	Suppose that there exists $q\in C$ such that $k(q)>0$. 
	
	Since $\Phi_t^* g=e^{-2t}g$, the Riemannian manifold $(\Phi_t C, e^{2t} g)$ 
	is isometric to $(C,g)$. Therefore, 
	$$
	k(\Phi_t q)=e^{2t} k(q).
	$$ 
	The function $k$ is continuous. Moreover, we have
	$$
	\lim_{t\to \infty} \Phi_t q =p
	$$  
	according to Lemma \ref{L2}.
	Hence,
	$$
	k(p)=\lim_{t\to \infty} k(\Phi_t q)=\lim_{t\to \infty}e^{2t} k(q) =\infty.
	$$
	But $k(p)$ is finitely defined. Thus the sectional curvature is equal to zero on $C$. Therefore, $(C,g)$ is a flat Riemannian manifold.
	
\end{proof}

\begin{theorem}\label{global selfsimilar manifold with vanishing}
	Any global selfsimilar manifold $(C,g,\xi)$ is isomorphic to $\left(\R^n, \sum_{i=1}^n\left(dx^i\right)^2,\rho+\eta\right)$,
		where $a\in \R$, $\rho=\sum_{i=1}^n x^i \dxi$ is an Euler vector field, $
		\eta \in \text{so}(n)$ is a Killing vector field.
\end{theorem}

\begin{proof}
	According to Proposition \ref{p2}, $(C,g)$ is a flat Riemannian manifold.
	
	The Riemannian manifold $C$ contains a geodesic ball $B_p$ with center $p$ and radius $\e$. Since $(C,g)$ is flat, $B_p$ is isometric to the ball $D^\e\subset \R^n$ of radius $\e$. Then $(\Phi_{t} B_p, g)$ is isometric to $(B_p, e^2t g)$ i.e.  $(\Phi_{t} B, g)$ is isometric to the ball $D^{e^{2t}\e}\subset \R^n$ of radius $e^{2t}\e$. Since $V=C$, we have
	$$
	\lim_{t \to\infty} \Phi_t B_p=\bigcup_{t\in \R^{>0}} \Phi_t B_p=C.
	$$   
	On the other hand,
	$$
	\lim_{t \to\infty} \Phi_t B_p=\lim_{t \to\infty} D^{e^{2t}\e}=\R^n	.
	$$
	Therefore, 
	$
	C= \R^n.
	$
	Since
	$$
	\L_{\rho} g=2g=\L_\xi g,
	$$  
	the field $\eta:=\xi-\rho$ is killing.
\end{proof}

\subsection{A local structure of selfsimilar manifolds}\label{s22}

\subsubsection{The case of a point where a homothetic vector field does not vanish}\label{s221}

\begin{lemma}\label{L3}
	Let $M$ be a manifold, $U$ an open subset of $M\times \R^{>0}$, $s$ a coordinate on $\R^{>0}$, $\pi: M\times \R^{>0} \to M$ a natural projection, and $g$ a Riemannian metric on $U$. For any $t\in \R^{>0}$ consider a map $\lambda_t: M\times \R^{>0} \to M \times \R^{>0}$ defined by  $\lambda_t (x\times s)=x\times ts$. Let for any $x\in U$ and $t\in \R^{>0}$ satisfying $\lambda_t x \in U$ we have 
	$$
	(\lambda_t^* g )(x) =t^2 g(x).
	$$
	Then the metric $g$ on $U$ can be extended to a selfsimilar metric $\tilde g$ on $(\pi (U)\cap M)\times \R^{>0}$. 
\end{lemma}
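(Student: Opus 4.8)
The plan is to reconstruct the base data $(g_M,\alpha,f)$ directly from $g$, using the scaling hypothesis to transport the metric along each ray $\{m\}\times\R^{>0}$ to the normalized level $t=1$, and then to assemble these data into a global selfsimilar metric by the formula of Proposition 2.4. Since $\pi$ is an open map, $\pi(U)$ is open in $M$, so $\pi(U)\times\R^{>0}$ is the natural candidate domain for the extension.

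First I would fix $m\in\pi(U)$, choose some $t_0$ with $m\times t_0\in U$, and imitate the computation in the proof of Proposition 2.4 to read off the $t=1$ components. Concretely, for horizontal vectors $X_1,X_2\in\pi^*TM$ I set
$$
g_M(m)(X_1,X_2)=t_0^{-2}\,g(X_1,X_2)(m\times t_0),\qquad \alpha(m)(X)=t_0^{-1}\,g\!\left(X,\tfrac{\partial}{\partial t}\right)(m\times t_0),
$$
$$
f(m)=g\!\left(\tfrac{\partial}{\partial t},\tfrac{\partial}{\partial t}\right)(m\times t_0).
$$
These are exactly the normalized components predicted by the scaling weights $t^2,\,t,\,1$ appearing in $(2.1)$–$(2.4)$.

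The key step is to show these definitions are independent of the choice of $t_0$; this is the only place the hypothesis is used. If $m\times t_0,m\times t_1\in U$, then with $q=t_1/t_0$ we have $\lambda_q(m\times t_0)=m\times t_1$, so the assumption $(\lambda_q^*g)(m\times t_0)=q^2 g(m\times t_0)$ applies directly (note the segment between the two points need not lie in $U$ — only the endpoints matter). Using $d\lambda_q=\mathrm{Id}$ on $\pi^*TM$ and $d\lambda_q\!\left(\tfrac{\partial}{\partial t}\right)=q\,\tfrac{\partial}{\partial t}$ and expanding the three component types, the factors $q^2,q,1$ precisely cancel the powers of $t_0,t_1$, giving agreement; this consistency computation is the computational heart of the argument. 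Smoothness of $g_M,\alpha,f$ then comes for free: near any $m_0$ a single $t_0$ works on a whole product neighborhood contained in the open set $U$, so each of $g_M,\alpha,f$ is given by a smooth expression in $m$.

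With the base data in hand I would define
$$
\tilde g=t^2 g_M+t\,\text{Sym}(dt\otimes\alpha)+f\,dt^2
$$
on $\pi(U)\times\R^{>0}$. By Proposition 2.4 this $\tilde g$ is selfsimilar, and by construction its components at any $m\times t\in U$ recover those of $g$, so $\tilde g|_U=g$. The remaining, and main, obstacle is to verify that $\tilde g$ is a genuine Riemannian metric on all of $\pi(U)\times\R^{>0}$: the cross term $t\,\text{Sym}(dt\otimes\alpha)$ could a priori destroy positive-definiteness away from $U$, so a pointwise check is not available. Instead I would exploit invariance: since $\tilde g$ has the form of Proposition 2.4 it satisfies $\lambda_q^*\tilde g=q^2\tilde g$, and both pullback by the diffeomorphism $\lambda_q$ and multiplication by $q^2>0$ preserve positive-definiteness. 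As every ray $\{m\}\times\R^{>0}$ with $m\in\pi(U)$ meets $U$ at a point where $\tilde g=g>0$, positivity propagates along the entire ray. Hence $\tilde g$ is Riemannian and selfsimilar, and extends $g$, which completes the proof.
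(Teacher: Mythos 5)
Your proof is correct and is essentially the paper's argument: both extend $g$ by transporting it along each ray via $\lambda_q$ with the normalization $q^{-2}\lambda_q^*g$, and both use the hypothesis exactly once, to show the result is independent of the chosen scale. The only difference is presentational — you work componentwise through the decomposition of Proposition 2.4 and spell out the smoothness and positive-definiteness checks that the paper dismisses as obvious, while the paper rescales the whole tensor at once.
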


\begin{proof}
	For any $x\in (\pi (U)\cap M)\times \R^{>0}$ there exists $q\in \R^{>0}$ such that $\lambda_t(x) \in U$. Define $\tilde{g}(x)$ by 
	$$
	\tilde{g}(x)(X,Y)=t^{-2}\lambda_t^*g(x)(X,Y)= t^{-2}g(\lambda_t x)({\lambda_t}_* X,{\lambda_t}_* Y),
	$$
	for any $X,Y \in T_x (M\times\R^{>0})$. We need to check that the value $\tilde{g}(x)$ does not depend on the choice of $t$. Let $t'\ne q$ satisfies $\lambda_{t'} x \in U$. Then, by the conditions of the lemma,
	$$
	g(\lambda_{t'} x)({\lambda_{t'}}_* X,{\lambda_{t'}}_* Y)=\lambda_\frac{t'}{t}^* g(\lambda_t x)({\lambda_t}_* X,{\lambda_t}_* Y)=\left(\frac{t'}{t}\right)^{2}g(\lambda_t x)({\lambda_t}_* X,{\lambda_t}_* Y).
	$$
	Therefore, 
	$$
	{t'}^{-2}g(\lambda_{t'} x)({\lambda_{t'}}_* X,{\lambda_{t'}}_* Y)=t^{-2}g(\lambda_t x)({\lambda_t}_* X,{\lambda_t}_* Y).
	$$
	Thus, the value $\tilde{g}(x)$ is correctly defined. Moreover, $\tilde{g}(x)$ is obviously selfsimilar. 
	
\end{proof}			

\begin{theorem}\label{local structure not vanish}
	Any selfsimilar manifold is locally isomorphic to a global selfsimilar manifold near a point where the homothetic vector field does not vanish. 
\end{theorem} 

\begin{proof}
	Let $(C,g,\xi)$ be a selfsimilar manifold. As in the proof of Theorem \ref{global selfsimilar with nowhere vanishing}, for any $p\in C$ define
	$$
	s(p)=|\xi|(p)=\sqrt{g(\xi,\xi)}\ (p).
	$$
	Consider a point $x\in C$. The level set
	$$
	M=\{p\in C \ |\ s(p)=s(x)\}
	$$
	is a smooth submanifold by the same argument as in the proof of Theorem \ref{global selfsimilar with nowhere vanishing}. A manifold $C$ contains a tubular neighborhood $U$ with respect to the field $\xi$. By the definition, $U$ is diffeomorphic to an open subset $W\subset M \times \R$. The diffeomorphism $\alpha: W \to U$ is defined by the rule
	$$
	\alpha : p\times u \to \exp_p (u\xi). 
	$$
	As in the proof of Theorem \ref{global selfsimilar with nowhere vanishing}, by solving a differential equation on $r$ along integral curves we get 
	$$
	s=a e^u
	$$
	and 
	$$
	\xi= s\frac{\partial}{\partial s}
	$$
	in $U$. Thus, $U$ is a subset $M\times \R^{>0}$ and 
	$
	\xi= s\frac{\partial}{\partial s},
	$
	where $s$ is a coordinate on $\R^{>0}$. For any $x\in U$ and $t\in \R^{>0}$ satisfying $\lambda_t x \in U$ we have 
	$$
	(\lambda_t^* g )(x) =t^2 g.
	$$
	By Lemma \ref{L3}, the metric $g$ on $U$ can be extended to a selfsimilar metric $\tilde g$ on ${(\pi (U)\cap M)\times \R^{>0}}$. Therefore,  $U$ is isomorphic to an open subset of the selfsimilar cone $\left((\pi (U)\cap M)\times \R^{>0},\tilde{g},s\frac{\partial}{\partial s}\right)$.
	
\end{proof}

\subsubsection{The case a point where a homothetic vector field vanishes}\label{s222}

\begin{lemma}\label{L4}
	Let $(C,g)$ be a Riemannian manifold and $K\subset C$ a compact subset. Then there exists $\e\in \R^{>0}$ such that
	$$
	K^\e=\{p\in C\ | \ \text{dist}(p,K)\le \e\}
	$$ 
	is a compact submanifold with a boundary.  
\end{lemma}
\begin{proof}
	For any $q\in K$, the manifold $C$ contains a closed geodesic ball $\overline{B_q}$ with center in $q$ and radius $\e_q$. Then $\{B_q\}$ is an open covering if $K$. Let $B_{q_1},\ldots,B_{q_k}$ be a finite subcovering. Set
	$$
	N=\bigcup_{1\le i\le k}{B_{q_i}} .
	$$
	Then
	$$
	\overline N=\bigcup_{1\le i\le k} \overline {B_{q_i}}=\overline{\bigcup_{1\le i \le k}  B_{q_i}} 
	$$ 
	is compact. Hence, the boundary $\d N$ is compact. Consider two cases:

	i) $\d N=\emptyset$. Then $\overline  N$ is closed and open subset of $C$. Therefore $\overline N=C$ and for any $\varepsilon \in\R^{>0}$ the set 
	$$
	K^\e=\{p\in M\ | \ \text{dist}(p,K)\le \e\}
	$$
	is compact because $C=\overline N$ is compact and $\text{dist}(*,K)$ is a continuous function.

	ii) $\d N \ne \emptyset$. Then the exists a point $p\in \d N$ such that 
	$$
	\left.\left.\min_{x\in \d N}\right(\text{dist}(x,K)\right)=\text{dist}(p,K). 
	$$
	Suppose $\text{dist}(p,K)=0$, i.e. $p\in K\subset N$. The set $N$ is open hence $N\cap\d N=\emptyset$. But $p \in \d N$. Therefore, $\varepsilon:=\text{dist}(p,K)>0$. Then
	$$
	K^\e=\{p\in M\ | \ \text{dist}(p,K)\le \e\}=\{p\in N\ | \ \text{dist}(p,K)\le \e\}
	$$
	is compact because $N$ is compact and $\text{dist}(*,K)$ is a continuous function. 
\end{proof}

\begin{lemma}\label{L5}
	Let $(C,g,\xi)$ be a selfsimilar manifold and $\Phi_t=\exp(-t\xi)$ the flow along the vector field $-\xi$. Suppose that the vector field $\xi$ vanishes at a point $p\in C$. Let $V$ be the set of point $q$ satisfying: 
	\begin{itemize}
		\item [(i)] $\Phi_t(q)$ is defined for any $t\in[0,\infty)$.
		\item [(ii)] $\underset{t\to\infty}\lim\Phi_t(q)=p. $
	\end{itemize}
	Then $V$ is an open subset of $C$.
\end{lemma}

\begin{proof}
	A Riemannian manifold $C$ contains a geodesic ball $B_p^\epsilon$ with center $p$ and radius $\epsilon$. Let us show that $B_p^\epsilon\subset V$. The flow $\Phi_t$ fixes the point $p$ and decreases distances.
	Therefore, $\Phi_t$ does not increase a distance between the point $p$ and an image of $q\in V$. Hence, the flow does not decrease the distance between the bound $\d B_p^\epsilon$ and the image of $q$. Thus, the flow $\Phi_t$ is defined for $t\in [0,\infty)$. Let $\gamma$ be a path between $q$ and $p$ lying into $B_p^\epsilon$ and $l$ be a length of $\gamma$. Then $\Phi_t(\gamma)$ is a curve connecting $\Phi_t (\gamma)$ with $p$ and the length of $\Phi_t (\gamma)$ is $e^{-2t}l$. Since
	$$
	\lim_{t \to \infty} e^{-2t}l=0,
	$$
	we have
	$$
	\lim_{t \to \infty} \Phi_t (\gamma) =p.
	$$
	
	Let $v\in V$ and $\gamma$ be a path between $v$ and $p$ along $\xi$. By Lemma \ref{L4}, there exists $\e \in \R^{>0}$ such that
	$$
	\gamma^\e_v = \{x\in M\ | \ \text{dist}(x,\gamma)\le \e \}
	$$
	is a compact submanifold with a boundary. As above, the flow $\Phi_t$ does not increase a distance between the path $\gamma$ and an image of $q\in \gamma^\e_v$.  Hence, for any $q\in \gamma^\e_v$ the flow does not increase the distance between the bound $\d \gamma^\e_v$ and the image of $q$. We have the flow along a vector field on a compact manifold with a boundary not decreasing the distance to the boundary.  Therefore, the flow is defined for any point in the interior and for all positive times. Moreover, $\underset{t\to\infty}\lim\Phi_t(q)=p $ by the same argument as above. Hence, $\gamma^\e_v$ lies in $V$. 
	
	For any point $v\in V$ we can construct an open neighborhood $\gamma^\e_v$ lying in $V$. Thus, $V$ is open.

\end{proof}

\begin{theorem}\label{local structure vanish}
		Let $(C,g,\xi)$ be a selfsimilar manifold and $\Phi_t=\exp(-t\xi)$ the flow along the vector field $-\xi$. Suppose that the field $\xi$ vanishes at a point $p\in C$. Let $V$ be a set of point $q$ satisfying: 
	\begin{itemize}
		\item [(i)] $\Phi_t(q)$ is defined for any $t\in[0,\infty)$.
		\item [(ii)] $\underset{t\to\infty}\lim\Phi_t(q)=p. $
	\end{itemize}
	Then $V$ is an open neighborhood of $p$ and $(V,g,\xi)$ is isomorphic to a part of global selfsimilar manifold. 
\end{theorem}

\begin{proof}
 	According to Lemma \ref{L5}, $V$ is open, There $\left(V,g|_V\right)$ is a Riemannian manifold. Hence, $V$ contains a closed geodesic ball $B$ with center in $p$. Let $S^{n-1}=\d B$ be the corresponding sphere. The flow $\Phi$ decrease distances to $p$. Therefore, for any $q\in V$ there exists an unique $s_q\in \R$ such that $\Phi_{s_q} q \in S^{n-1}$.
 	Define $$\alpha: V \setminus p \to S^{n-1} \times \R \ \ \ \alpha(q)= \Phi_{s_q} q\times s_q.$$
 	The map $\alpha $ is a continuous embedding. Then the metric $\alpha_* g$ is correctly defined on $\alpha(V)\subset S^{n-1}\times \R$. 
 	Moreover, $\alpha_* \xi = \ds$ therefore $\left(\alpha(V),\pi_*g,\ds\right)$
 	is a selfsimilar manifold. By Lemma \ref{L3}, $\alpha_* g$ can be extended to a selfsimilar metric on $S^{n-1}\times\R$. The gluing 
 	$$
 	C:=\left. V \sqcup\left( S^{n-1}\times\R \right)\right/_{\alpha(q)\sim q}\simeq \R^n
 	$$
 	is a global selfsimilar manifold.
\end{proof}
	Combining theorems \ref{global selfsimilar manifold with vanishing}, \ref{global selfsimilar with nowhere vanishing}, \ref{local structure not vanish}, and \ref{local structure vanish} we get Theorem \ref{T_1}. 

\begin{example}\label{example with to parts}
	Let $L= \{(1,y)\subset \R^2|y\in [-1,1]\}$, $C=\R^2 \setminus L$,
	$W=\{A\in C| 0p\cap L \ne \emptyset\}$, where $0p$ is a line segment. Let $\varphi$ be a smooth function on $\R$ such that 	
	\begin{align*}
	\varphi(x)=1 \  &\text{for} \ x\in (-\infty, -1] \cup [1,\infty),\\
	\varphi(x)>1 \ &\text{for}\ x\in (-1,1). 
	\end{align*}
	Define
	$$
	g=\begin{cases}
	dx^2 +dy^2 \ &\text{on} \ C \setminus W; \\
	g=\varphi\left(\frac{y}{x}\right) dx^2+dy^2\  &\text{on}\  W.
	\end{cases}
	$$
	Then $\left(C,g,\xi=x\frac{\d}{\d x}+y\frac{\d}{\d y}\right)$ is a selfsimilar manifold. Let $V$ be as in Theorem \ref{local structure vanish}. Then $V=C\setminus W$ be the maximal open flat Riemannian submanifold of $(C,g)$. The field $\xi$ vanishes at a point but $(C,g)$ is not flat. 
\end{example}

\subsection{Selfsimilar manifolds with a potential homothetic vector fields}\label{s23}

\begin{defin}
We say that $(C,g,\xi)$ is a selfsimilar manifold with a {\bfseries potential vector field} if $\xi$ is locally defined as a gradient of a function. If $\xi=\text{grad} \ f$ on a domain $U$ then $\iota_\xi g|_U=df$. Moreover, a form is closed if and only if it is locally exact. Therefore, the field $\xi$ is potential if and only if $d\iota_\xi g=0.$
\end{defin}
\begin{example}\label{E1}
	Let $\left(M\times\R^{>0},s^2g_M+ds^2\right)$ be a Riemannian cone then $s\ds$ is a potential vector field. Actually,
	$$
	d\theta= d\iota_{s\ds} \left(s^2g_M+ds^2\right)=d(sds)=0.	
	$$ 
\end{example}

\begin{example}\label{E2}

A radiant vector field $\rho=\sum_{i=1}^n x^i\dxi$ on a Euclidean space $\left(\R^n,g=\sum_{i=1}^n\left(dx^i\right)^2\right)$ is a potential homothetic vector field. Actually, 
$$
d\iota_\rho g= d\left(\sum_{i=1}^n x^i dx^i\right)=0.
$$ 
\end{example}

\begin{proposition}\label{P}
	Let 
	$
	\left(M\times\R^{>0},g=s^2 g_M +sds \cdot \alpha + f ds^2,s\ds\right)
	$
	be a selfsimilar manifold (see Proposition \ref{p1}). Then the field $s\ds$ is potential if and only if 
	$$
	2\alpha+df=0.
	$$
\end{proposition}	
\begin{proof}
	We have
	$$
	d\iota_{s\frac{\partial}{\partial s}} g=d(s^2\alpha +sf ds)=(2\alpha +df)sds +s^2 d\alpha.
	$$
	Therefore, if $df=-2\alpha$ then $d\alpha=0$ and $d\iota_{s\frac{\partial}{\partial s}} g=0$. Hence, $df=-2\alpha$ then  the field $s\ds$ is potential. 
	
	Let  $s\ds$ is potential. Then
	\begin{equation}\label{2.8}
	(2\alpha +df)sds +s^2 d\alpha=d\iota_{\frac{\partial}{\partial s}}g=0.
	\end{equation}
	Since $f$ and $\alpha$ do not depend on $s$, we have
	$$
	\iota_{\frac{\partial}{\partial s}} df=0, \ \ \ \iota_{\frac{\partial}{\partial s}}d\alpha=0, \ \  \ \text{and} \ \   \iota_{\frac{\partial}{\partial s}}\alpha=0.
	$$ 
	Combining this with \eqref{2.8}, we get
	$$
	0= \iota_{\frac{\partial}{\partial s}}((2\alpha +df)sds +s^2 d\alpha)=\left(2\iota_{\frac{\partial}{\partial s}}\alpha+ \iota_{\frac{\partial}{\partial s}}df\right)sds+(2\alpha+df)s+s^2\iota_{\frac{\partial}{\partial s}}d\alpha=(2\alpha+df)s.
	$$
	Hence, 
	$$
	(2\alpha+df)s	=0,
	$$
	and, since $s$ is positive definite, 
	$$
	2\alpha+df=0.
	$$
	
\end{proof}

\begin{proof}[Proof of Theorem \ref{T1}]

	(i) According to Theorem \ref{T_1} a global selfsimilar manifold with a vanishing vector field is isomorphic to a collecting $\left(\R^n, \sum_{i=1}^n\left(dx^i\right)^2,\rho+\eta\right)$, where $a\in \R$, $\rho=\sum_{i=1}^n x^i \dxi$ is a radiant vector field and $\eta$ be a Killing vector field. We have 
	$
	d\iota_{\rho} g=0.
	$
	Therefore, $\rho+\eta$ is potential if and only if $d\iota_\eta=0$. In a proper ortonormal coordinate system $\left(\tilde x^1,\ldots,\tilde x^n\right)$ the Killing vector field $\eta$ admits a form 	$
	\eta = {\sum_{i=1}^{\left[n/2\right]} a_i\left(\tilde x^{2i+1}\frac{\partial}{\partial \tilde x^{2i}}
		- \tilde x^{2i}\frac{\partial}{\partial \tilde x^{2i+1}}\right)}$, $a_i\in\R$ (any Killing cetor field admits such form). Thus,
	$$
	0=\iota_\eta g=\sum_{i=1}^{\left[n/2\right]} a_i\left(d\tilde x^{2i+1}\wedge d\tilde x^{2i}-d\tilde x^{2i}\wedge d\tilde x^{2i+1}\right)=\iota_\eta g=\sum_{i=1}^{\left[n/2\right]} 2a_id\tilde x^{2i+1}\wedge d\tilde x^{2i}.
	$$
	Thus, for all $i$ we have $a_i=0$. That is, the field $\rho+\eta$ is potential if and only if $\eta=0$.

	(ii) According to Theorem \ref{T_1} a global selfsimilar manifold with a nowhere vanishing vector field is isomorphic to a collection  $\left(\hat M=M\times \R^{>0},\hat g=s^2 g_M +s ds \cdot \alpha +  ds^2,s\ds \right)$. By Proposition \ref{P}, the field $s\ds$ is potential if and only if $\alpha=0$. Thus, a global selfsimilar manifold with a nowhere vanishing potential vector field is a Riemannian cone.

\end{proof}

	\begin{proposition}\label{pp}
		Let $\left(C,g,\xi\right)$ is a selfsimilar manifold with a potential homothetic vector field and $\varphi=\frac{1}{2}g(\xi,\xi)$. Then we have $$\xi=\text{grad} \ \varphi.$$
	\end{proposition}
	
	\begin{proof}
		According to Theorem \ref{T1}, a selfsimilar manifold $\left(C,g,\xi\right)$ with a potential homothetic vector field is locally isomorphic to a selfsimilar manifold $\left(M\times \R^{>0},s^2g_M+sds\cdot\alpha +ds^2,s\ds\right)$ or $\left(\R^n, \sum_{i=1}^n\left(dx^i\right)^2,\rho=\sum_{i=1}^nx^i\dxi\right)$. In the first case,
		$$
		\iota_\xi g = sds=\frac{1}{2}ds^2=d\varphi.
		$$
		In the second case,
		$$
		\iota_\xi g=\sum_{i=1}^n x^idx^i=d\varphi
		$$
		Thus, $\xi=\text{grad} \ \varphi$. 
		
	\end{proof}

	\subsection{A selfsimilar manifold without any potential homothetic vector fields}\label{s24}

	\begin{example}
		Consider a collection $\left(C=S^1\times \R^{>0},g=s^2d\varphi^2+sds\cdot d\varphi+ds^2\right)$, where $\varphi$ and $s$ are coordinates on $S^1$ and $\R^{>0}$. Any tangent vector admits a form $a\frac{\d}{\d\varphi}+b\frac{\d}{\d\varphi}$. We have 
		$$
		d\varphi^2\left(a\frac{\d}{\d\varphi},a\frac{\d}{\d\varphi}\right)
		+2d\varphi\left(a\frac{\d}{\d\varphi}\right)+1=a^2+2a+1>0. 
		$$
		By Proposition \ref{p1}, $\left(C,g,s\ds\right)$ is a selfsimilar manifold. 
		Any homothetic vector field ${\xi=a\ds+b\frac{\d}{\d\varphi}}$ satisfies
		$$
		2g=\L_\xi g = 2as d\varphi^2+2s^2db\cdot d\varphi + a ds\cdot d\varphi+sda\cdot d\varphi+sds\cdot db +2da\cdot ds=
		$$
		$$
		=\left(2as+2s^2\frac{\d b}{\d\varphi}+s\frac{\d a}{\d\varphi}\right)d\varphi^2+\left(2s^2\frac{\d b}{\d s}+a+s\frac{\d a}{\d s}+s \frac{\d b}{\d \varphi}+2\frac{\d a}{\d\varphi}\right)d\varphi\cdot ds+\left(s\frac{\d b}{\d s}+2\frac{\d a}{\d s}\right)ds^2.
		$$
		Therefore,
		\begin{equation}\label{Eq1}
		\left\{
		\begin{aligned}
		2as+2s^2\frac{\d b}{\d\varphi}+s\frac{\d a}{\d\varphi}&=2s^2 \\
		2s^2\frac{\d b}{\d s}+a+s\frac{\d a}{\d s}+s \frac{\d b}{\d \varphi}+2\frac{\d a}{\d\varphi}&=2s \\
		s\frac{\d b}{\d s}+2\frac{\d a}{\d s}&=2 \\
		\end{aligned}
		\right.
		\end{equation}
		The field $\xi$ is potential if and only if  
		\begin{equation}\label{Eq2}
		0=\iota_\xi g = d\left(bs^2d\varphi+asd\varphi+bsds+ads\right)=
		2bs+s^2\frac{\d b}{\d s} d\varphi+a+s\frac{\d a}{\d s}-s\frac{\d b}{\d \varphi}-\frac{\d a} {\d \v}.
		\end{equation}
		Denote 
		$$
		x=\frac{\d a}{\d \v},\ \ y=\frac{\d a}{\d s}, \ \  z=\frac{\d b}{\d \v}, \ \ w=\frac{\d b}{\d s}.
		$$
		Combining $\eqref{Eq1}$ and $\eqref{Eq2}$, we get the system of equation 
		$$
		\left\{
		\begin{aligned}
			sx+2s^2z&=2s^2-2as\\
			2x+sy+sz+2s^2w & = 2s-a\\
			2y+sw &=2 \\
			-x+sy-sz+s^2w &=-2bs. \\ 
		\end{aligned}
		\right.
		$$
		The solving of the system is the following 
		$$
		\left\{
		\begin{aligned}
		x&=a-2bs\\
		y& = \frac{a}{2s}-b+1\\
		z &=-\frac{3a}{2s}+b+1 \\
		w &=\frac{-a+2bs}{s^2}. \\ 
		\end{aligned}
		\right.
		$$
		We have 
		$$
		\frac{\d^2 a}{\d \v \d s}=\frac{\d x}{\d s}=\frac{\d a}{\d s}-2b-2s\frac{\d b}{\d s}=y-2b-2sw=\frac{a}{2s}-b+1-2b-\frac{-2a+4bs}{s}
		=-\frac{3a}{2s}-7b+1.
		$$
		On the other hand,
		$$
		\frac{\d^2 a}{\d \v \d s}=\frac{\d y}{\d \v}=\frac{1}{2s}\frac{\d a}{\d \v}-\frac{\d b}{\d \v}= \frac{x}{2s}-z=\frac{a}{2s}-b+\frac{3a}{2s}-b-1=\frac{2a}{s}-2b-1.
		$$
		Therefore,
		$$
		-\frac{3a}{2s}-7b+1=\frac{2a}{s}-2b-1.
		$$
		Hence,
		\begin{equation}\label{Eq4}
		7a+10sb-4s=0.
		\end{equation}	
		Then
		$$
		0=\frac{\d (7a+10sb-4s)}{\d \v}=7x+10sz=7a+14sb-15a+10sb+10s=-8a+24sb+10s.
		$$
		Combining this with \ref{Eq4}, we get
		$$
		a=\frac{49s}{62} \ \ \ \text{and} \ \ \ b=\frac{-19}{124}.
		$$
		Then
		$$
		0=\frac{\d b}{\d s}=w=\frac{-\frac{49s}{62}+\frac{-19s}{62}}{s^2}\ne 0.
		$$
		Therefore, there are not potential homothetic vector fields on $\left(S^1\times \R^{>0},g=s^2d\varphi^2+sds\cdot d\varphi+ds^2\right)$.
	\end{example}

	\section{Selfsimilar Hessian manifolds}\label{s3}

				\begin{defin}\label{d}
		A {\bfseries flat affine manifold} $(M,\nabla)$ is a differentiable manifold $M$ equipped with a flat torsion-free connection $\nabla$. Equivalently, it is a manifold equipped with an atlas such that all translation maps between charts are affine transformations (see e.g. \cite{FGH}). A {\bfseries radiant manifold} $(C,\nabla, \rho)$ is a flat affine manifold $(C,\nabla)$ endowed with a {\bfseries radiant vector field } $\rho$ i.e. a field satisfying
		\begin{equation}\label{3.1}
		\nabla \rho =\text{Id}.
		\end{equation}
		
		Equivalently, it is a manifold equipped with an atlas such that all translation maps between charts are linear transformations. In the corresponding local coordinates we have 
		$$
		\rho=\sum x^i \frac{\d}{\d x^i}
		$$ 
		(see e.g. \cite{Go}).  
	\end{defin} 

	\begin{defin}
		Let $(M,\nabla)$ be a flat affine manifold. If a Riemannian metric $g$ is locally expressed by a Hessian of a function
		$$
		\text{Hess}\ \varphi =\nabla d\varphi
		$$
		then $g$ is called a {\bfseries Hessian metric} and the triple $(M,\nabla, g)$ is called a {\bfseries Hessian manifold} (see e.g. \cite{shima}). More concretely, we have 
		$$
		g(X,Y)=XY(\varphi)-\nabla_X Y (\varphi).
		$$ 
	\end{defin}

	\begin{defin}
		A {\bfseries selfsimilar Hessian manifold} $(C,\nabla, g, \xi)$ is a Hessian manifold $(C,\nabla,g)$ endowed with a vector field $\xi$ such that $(C,g,\xi)$ is a selfsimilar manifold and the flow along $\xi$ preserves $\nabla$. If $\xi$ is complete then $C$ is called a {\bfseries global selfsimilar Hessian manifold}.    
	\end{defin}

\subsection{Selfsimilar Hessian manifolds with potential vector fields}
	\begin{proposition}[\cite{Go}]\label{pg}
	Let $(C,\nabla)$ be an affine flat manifold and $\xi$ a vector field such that the flow along $\xi$ preserves $\nabla$. Then 
	$$
	\nabla \nabla \xi =0.
	$$ 
	Equivalently, for any $X,Y \in TC$ we have 
	$$
	\nabla_X\nabla_Y \xi=\nabla_{\nabla_X Y} \xi.
	$$
\end{proposition}

\begin{lemma}\label{3.3}
		Let $\left(C,\nabla,g,\xi\right)$ be a selfsimilar Hessian manifold. Then the field $\xi$ is potential if and only if  
			\begin{equation} \label{z}
		g(X,\nabla_Y \xi) -g(Y, \nabla_X \xi)=0.
		\end{equation}
		
\end{lemma}

\begin{proof}
	By Theorem \ref{T1}, for any $X,Y\in TC$ we have
	$$
	\left(d\iota_\xi g\right)(X,Y) = X(g(\xi,Y))-Y(g(\xi,X))-g(\xi,[X,Y])=
	$$
	$$
	=XY\xi(\varphi)-X\nabla_Y \xi(\varphi) -YX\xi(\varphi)+Y\nabla_X\xi(\varphi)-[X,Y]\xi(\varphi)+\nabla_{[X,Y]} \xi (\varphi)=
	$$
	$$
	=-\left(X\nabla_Y \xi\right)(\varphi) +\left(Y\nabla_X \xi\right)(\varphi) +\left(\nabla_{[X,Y]} \xi \right)(\varphi).
	$$
	Combining this with the flatness of $\nabla$ we get
	$$
	\left(d\iota_\xi g\right)(X,Y)=-\left(X\nabla_Y \xi\right)(\varphi) +\left(Y\nabla_X \xi\right)(\varphi)+\nabla_X \nabla_Y \xi(\varphi)-\nabla_Y\nabla_X \xi(\varphi)=-g(X,\nabla_Y \xi) +g(Y, \nabla_X \xi).
	$$
	Therefore, the field $\xi$ is potential if and only if only \eqref{z} holds.
	
\end{proof}

According to Lemma \ref{3.3}, for any point $p\in C$ the linear operator $\nabla \xi|_p$ is self-adjoint with respect to $g|_p$. Therefore, $\nabla \xi|_p$ is diagonalizable. According to Proposition \ref{pg}, the operator $\nabla \xi : TC\to TC$ is $\nabla$ flat. Therefore, for any $p\in C$ the linear operator $\nabla_\xi |_p$, have the same eigenvalues $\lambda_1,\ldots,\lambda_k$. Let $V_i$ be a set of vectors $X\in TC$ satisfying $\nabla_X \xi =\lambda_i X$. Then $V_i$ is a $\nabla$ flat vector subspace of $TC$ and  

\begin{equation}\label{111}
TC=\bigoplus V_i,
\end{equation}

The affine manifold $C$ is locally isomorphic to $U=\prod U_i$, where the decomposition to the direct product is compatible \eqref{111}. 
\begin{proposition}
	Let $C,g,\xi, V_i$ be as above. Then for any $i\ne j$ we have $V_i\perp V_j$. As a consequence the orthogonal decomposition $g=\bigoplus V_i$ defines the decomposition of the metric $g=\sum g_i$. 
\end{proposition}
\begin{proof}
	According to \ref{3.3}, for any $X\in V_i,$ and $Y\in V_j$, where $i\ne j$, we have 
	$$
	0=g(X,\nabla_Y \xi) -g(Y, \nabla_X \xi)=\left(\lambda_2-\lambda_1\right) g(X,Y).
	$$
	Therefore, for any $i\ne j$ we have $V_i \perp V_j$. 
\end{proof}
\begin{proposition} \label{ppp}
	Let $C,g,\xi, V_i,U$ be as above. Suppose that $g=\nabla \alpha$ where $\alpha=d\varphi$ on $U$. The decomposition $V=\bigoplus V_i$ defines a decomposition $g=\sum g_i$, $\xi=\sum \xi_i, \alpha=\sum \alpha_i$. Then the following conditions are satisfies
	\begin{itemize}
		\item [(i)] \label{i} For any $j\ne i$, $X\in V_j$ we have $\L_X g_i=0$.
		\item [(ii)] \label{ii}For any $j\ne i$, $X\in V_j$ we have $\nabla_X \xi_i=0$. 
		\item [(iii)] \label{iii} For any $j\ne i$, $X\in V_j$ we have $\L_X \alpha_i=0$.
		\item [(iv)] \label{iv} For any $i$ we have $g_i=\nabla {\alpha_i}$.
		\item [(v)] \label{v} For any $i$ we have $\L_{\xi_i} g_i=2g_i$.  
	\end{itemize}
\end{proposition}
\begin{proof}
	(i) It is enough to check the identity 
	\begin{equation} \label{X}
	\left(\L_X g_i\right) (Y,Z)=0 
	\end{equation}
	for flat vector fields $Y,Z$. 
	
	First let us prove \eqref{X} for $X=f\tilde{X}$, where $\tilde X$ is a flat vector field and $f$ a function on $M$. 
	
	We have 
	\begin{multline*}
		\L_{f\tilde X} g_i (Y,Z)=f\tilde X(g_i(Y,Z))-g_i([f\tilde X,Y],Z)-g_i(Y,[f\tilde X,Z])= \\ 
		=f\tilde X(g_i(Y,Z))+Y(f) g_i(\tilde X, Z)+Z(f) g_i (\tilde X,Y)-fg_i([\tilde X,Y],Z)-fg_i(Y,[\tilde X,Z]).		
	\end{multline*}
	The vector fields $\tilde X, Y_i,Z_i$ are flat. Hence, 
	\begin{equation*}\label{355}
	\L_{f\tilde X} g_i (Y,Z)=f\tilde X(g_i(Y,Z))+Y(f) g_i(\tilde X, Z)+Z(f) g_i (\tilde X,Y)
	\end{equation*}
	Let $Y_i$ and $Z_i$ be projection of $X$ and $Y$ on $V_i$. Then
	$$
	\tilde X(g_i(X,Y))=\tilde X(g(Y_i,Z_i))=\tilde XY_iZ_i\varphi=Y_i\tilde XZ_i\varphi=Y_ig(\tilde X,Z_i)=0
	$$ 
	because $\tilde X\perp V_i$. Moreover,
	$$
	g_i(\tilde X,Z)=g_i(\tilde X, Y) =0.
	$$
	Thus, 
	$$
	\L_{f\tilde X} g_i (Y,Z)=0.
	$$
	Any vector field $X\in V_j$ is a sum of fields of the form $f\tilde X$, where $\tilde X$ is flat. We have proved \eqref{X} for $X=f\tilde{X}$. Therefore, for any $X\in V_j$.
	$$
	\L_X g_i=0.
	$$
	(ii) 	Since, $\nabla \xi$ is diagonalizable and $X\in V_j$, we have $\nabla_X\xi\in V_j$.  Hence, $$\sum_{k=1}^n \nabla_X \xi_k=\nabla_X {\xi}=X\in V_j.$$ 
	Since $\xi_k\in V_k$ and $V_k$ is $\nabla$-flat, we have $\nabla_X {\xi_k}\in V_k$, for any $k$.
Therefore, $\nabla_X \xi_i=0$.
	
	(iv) Let us prove that $\nabla \alpha_i\in \text{Sym}^2 V_i^*$. It is enough to prove that if $j\ne i$ and $X\in V_j$ then $\iota_X \nabla \alpha_i=0$. For any flat field $Y$, we have
	$$
	\left(\nabla \alpha_i\right)(Y,X) = Y\alpha_i(X)-\alpha_i\left(\nabla_Y X\right)=0
	$$ 
	because $X,\nabla_Y X\in V^j$. Thus $\nabla \alpha_i\in \text{Sym}^2 V_i^*$. Combining this with 
	$$
	\sum g_i= \nabla \alpha =\sum \nabla \alpha_i.
	$$
	we get that 
	$$
	\nabla \alpha_i=g_i.
	$$

	(iii) For any flat  $Y\in TU$ we have 
	\begin{equation}\label{re}
	\left(\L_X \alpha_i\right)(Y)=X(\alpha_i(Y))-\alpha_i([X,Y]).
	\end{equation}
	According to item (iv),
	$$
	g_i(X,Y)=\nabla \alpha_i\left(X,Y\right)=X\left(\alpha_i(Y)\right)-\alpha_i\left(\nabla_X Y\right).
	$$
	Combining this with \eqref{re}, we obtain 
	$$
	\left(\L_X \alpha_i\right)(Y)=g_i(X,Y)+\alpha_i\left(\nabla_X Y\right)-\alpha_i([X,Y])=g_i(X,Y)+\alpha_i\left(\nabla_Y X\right).
	$$
	Since $X\in V_j$, $g_i(X,Y)=0$ and $\nabla_Y X\in V_j$ because $V_j$ is $\nabla$-flat. Therefore,
	$$
		\left(\L_X \alpha_i\right)(Y)=g_i(X,Y)+\alpha_i\left(\nabla_Y X\right)=0.	$$

	(v) Let us prove that $\L_{\xi_i} g_i\in \text{Sym}^2 V_i^*$. It is enough to prove that if $j\ne i$ and $X\in V_j$ then $\iota_X \L_{\xi_i}g=0$. We have $\iota_Xg_i=0$ and $[\xi_i,X]=0$. For any field $Y$, we have 
	$$
	\L_{\xi_i} g_i(X,Y) = \xi_i g_i (X,Y)-g([\xi_i,X],Y)-g_i(X,[\xi_i, Y])=-g([\xi_i,X],Y).
	$$ 
	since $X\in V_j$. It follows from the item (ii) that $\xi_i\in V_i$. Therefore, $[\xi_i,X=0]$ and 
	$$
	\L_{\xi_i} g_i(X,Y)=0.
	$$
	Thus, $\L_{\xi_i} g_i\in \text{Sym}^2 V_i^*$. Combining this with 
	$$
	\sum 2g_i=2g=\L_\xi g = \L_{\xi_i} g_i,
	$$
	we get that
	$$
	\L_{\xi_i} g_i=2g_i.
	$$
\end{proof}

\begin{defin} \label{ddd}
	We say that a selfsimilar Hessian manifold $(C,\nabla, g, \xi)$ is {\bfseries radiant} if and only if there exists a radiant vector field $\rho$ and a constant $\lambda\ne0$ such that $\xi=\lambda\rho$. 
	Equivalently, there is a flat affine atlas such that in the corresponding local coordinates we have 
	$$
	\xi=\lambda\sum  x^i \frac{\d}{\d x^i}
	$$ 
	(see Definition \ref{d}).  
\end{defin}
  We say that a selfsimilar Hessian manifold $\left(U,\nabla,\xi,g\right)$ is a direct product of selfsimilar Hessian manifolds $\left(U_i,\nabla_i,\xi_i,g_i\right)$ if and only if
  $$U=\prod U_i, \ \ \ \nabla=\sum \nabla_i, \ \ \ g= \sum g_i, \ \ \ \xi=\sum \xi_i. $$
\begin{theorem} \label{t0}
		Let $\left(C,\nabla,\xi\right)$ be a selfsimilar Hessian manifold. Then $\xi$ is potential if and only if $\left(C,\nabla,\xi\right)$ is locally isomorphic to a direct product of radiant selfsimilar Hessian manifolds
\end{theorem}
\begin{proof}[Proof of Theorem \ref{l39}]
	Let $\xi$ is potential. The flat affine manifold $C$ is locally isomorphic to a product of flat affine manifolds $U=\prod U_i$. By item (i) of Proposition \ref{ppp}, $(U,g)$ is isometric to the direct product $\prod\left( U_i, g_i\right)$. By item (ii), if $i\ne j$ then $\xi_i$ does not depend on $U_j$. By items (iv) and (v), $\left(U_i,\nabla_i, g_i,\xi_i\right)$ is a selfsimilar Hessian manifold.       
	
	Let $(C,\nabla,g,\xi)$ is locally isomorphic to a direct product of selfsimilar Hessian manifolds $\left(U_i,\nabla_i, g_i,\xi_i\right)$. According to Lemma \ref{3.3}, we need to check \eqref{z}. There is a decomposition 
	$$
	TU=\bigoplus V_i
	$$
	compatible with the product $U=\prod U_i$. For any $X\in V_i, Y\in  V_j$, where $i\ne j$ we have $\nabla_X\xi=\lambda_i X\in V_i, \nabla_Y \xi=\lambda_j Y\in  V_j$. Therefore,
	$$
	g(X,\nabla_Y \xi) -g(Y, \nabla_X \xi)=0,
	$$
	since $V_i\perp V_j$. If $X,Y \in V_i$ then 
	$$
	g(X,\nabla_Y \xi) -g(Y, \nabla_X \xi)=g(X,\lambda_i Y)-g(\lambda_i X, Y)=0.
	$$
	Moreover, the term
	$
	\omega(X,Y)=g(X,\nabla_Y \xi) -g(Y, \nabla_X \xi)
	$
	is bilinear. Therefore, we have 
	$$
	g(X,\nabla_Y \xi) -g(Y, \nabla_X \xi)=0.
	$$
\end{proof}

\begin{proposition}[\cite{G-A}]\label{GA}
	Let $(C,\nabla, \rho)$ be a radiant manifold and $g$ a Hessian metric on $M$ with respect to $\nabla$. Then 
	$$
	\L_\rho  g =g +\nabla(\iota_\rho g).
	$$
\end{proposition}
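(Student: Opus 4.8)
The plan is to reduce the asserted identity to the Codazzi equation satisfied by a Hessian metric. Writing $\theta=\iota_\xi g$, the claim $\text{Lie}_\xi g = g + \nabla\theta$ is tensorial in its two arguments, so I would verify it by evaluating on arbitrary vector fields $X,Y$ and comparing the two sides.

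First I would unwind the left-hand side. Using the standard formula for the Lie derivative of a symmetric $(0,2)$-tensor, the torsion-freeness of $\nabla$ (so that $[\xi,X]=\nabla_\xi X - \nabla_X\xi$ and likewise for $Y$), together with the radiant condition $\nabla_X\xi = X$, a direct computation collapses to
$$
(\text{Lie}_\xi g)(X,Y) = (\nabla_\xi g)(X,Y) + 2g(X,Y).
$$
Next I would compute the term $\nabla\theta$ on the right. Expanding $(\nabla_X\theta)(Y) = X(g(\xi,Y)) - g(\xi,\nabla_X Y)$ and rewriting $X(g(\xi,Y)) = (\nabla_X g)(\xi,Y) + g(\nabla_X\xi,Y) + g(\xi,\nabla_X Y)$, the radiant condition gives $g(\nabla_X\xi,Y)=g(X,Y)$ while the two occurrences of $g(\xi,\nabla_X Y)$ cancel, leaving
$$
(\nabla\theta)(X,Y) = (\nabla_X g)(\xi,Y) + g(X,Y).
$$

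Comparing the two displays, the desired identity is equivalent to the single equation $(\nabla_X g)(\xi,Y) = (\nabla_\xi g)(X,Y)$. This is precisely where the Hessian hypothesis must be used, and I expect it to be the crux of the argument: for $g=\nabla d\varphi$ locally, in affine coordinates (where the symbols of $\nabla$ vanish) one has $g_{ij}=\partial_i\partial_j\varphi$, so that $(\nabla_k g)_{ij}=\partial_k\partial_i\partial_j\varphi$ is totally symmetric in its three indices; equivalently $\nabla g \in \text{Sym}^3$. Interchanging the roles of $X$ and $\xi$ in this symmetric $(0,3)$-tensor yields exactly the missing equation, and combining it with the two expansions above finishes the proof. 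The only real bookkeeping subtlety is tracking which terms cancel in the expansions of $\text{Lie}_\xi g$ and $\nabla\theta$; the conceptual content is carried entirely by the total symmetry of $\nabla g$ for a Hessian metric.
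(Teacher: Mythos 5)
Your argument is correct and complete. Note that the paper itself gives no proof of this proposition: it is stated as a citation to [G-A] (Garc\'ia-Ariza), so there is no internal proof to compare against. Your two expansions are right: torsion-freeness plus $\nabla_X\xi=X$ gives $(\text{Lie}_\xi g)(X,Y)=(\nabla_\xi g)(X,Y)+2g(X,Y)$, and the expansion of $\nabla\iota_\xi g$ gives $(\nabla_X g)(\xi,Y)+g(X,Y)$, so the identity reduces exactly to $(\nabla_\xi g)(X,Y)=(\nabla_X g)(\xi,Y)$, which is the total symmetry (Codazzi property) of $\nabla g$ for a Hessian metric --- immediate in affine coordinates from $(\nabla_k g)_{ij}=\partial_k\partial_i\partial_j\varphi$. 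This is the standard route, and correctly isolates where the Hessian hypothesis (as opposed to mere flatness of $\nabla$) is actually used.
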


\begin{proposition}\label{311}
	Let $(C,g,\xi)$ be a radiant Hessian manifold and $\lambda$ be as above. Then 
	$$
	g=\text{Hess}\left(\frac{g(\xi,\xi)}{4-2\lambda}\right).
	$$
\end{proposition}

\begin{proof}
	According to Proposition \ref{pp} we have 
	$$
	\iota_\xi g=d\left(\frac{g(\xi,\xi)}{2}\right).
	$$
	Combining this with Proposition \ref{GA}, we get 
	$$
	0=\lambda \L_\rho g -\lambda g -\lambda \nabla d\left(\iota_\rho g\right)=\L_\xi g -\lambda g -\nabla d\left(\iota_\xi g\right)=\left(2-\lambda\right) g -\nabla d \left(\frac{g(\xi,\xi)}{2}\right).
	$$
	Thus,
	$$
	g=\frac{1}{2-\lambda}\nabla\left(sds\right)=\nabla d\left(\frac{s^2}{4-2\lambda}\right)=\text{Hess}\left(\frac{s^2}{4-2\lambda}\right).
	$$
\end{proof}
	Since any selfsimilar Hessian manifold with a potential vector field is locally isomorphic to a direct product of radiant Hessian manifolds, the Proposition \ref{311} express a Hessian potential locally for any selfsimilar Hessian manifold with a potential vector field.

	\begin{rem}
		According to Theorem \ref{l39}, any global radiant Hessian manifold with a nowhere vanishing homothetic vector field admits a form $\left(M\times \R^{>0} , \nabla , g= s^2g_M+ds^2,s\ds\right)$ and, according to Proposition \ref{311}, the function $\frac{s^2}{4-2\lambda}$ is a potential of $g$. 
		Note that this construction is analogical to the construction of Sasakian manifolds. An odd dimensional Riemannian manifold $(M, g_M)$ is called Sasakian if the corresponding Riemannian cone $$
		{(\hat M,g) = (M\times \R^{>0}, s^2 g_M+ds^2)}
		$$ 
		is equipped with a dilatation-invariant complex structure, which makes $\hat M$ to a K\"ahler manifold (see \cite{Sasaki}). Then the function $s^2$ defines a K\"ahler potential of $g$ (see e.g. \cite{OV}). 
	\end{rem}

\subsection{The local structure of radiant Hessian manifolds}\label{s32}
\subsubsection{The case of a point where a homothetic vector field does not vanish}\label{321}
\begin{proposition}[\cite{Go}]\label{go}
	Let $(C,\nabla, \xi)$ be a radiant manifold. Then the flow along $\xi$ preserves the connection $\nabla$ .
\end{proposition}

\begin{lemma}\label{l0}
	Let $(M\times \R^{>0},g=s^2g_M+ds^2)$ be a Riemannian cone, $\pi: M\times \R^{>0} \to M$ a projection, $U$ an open subset of $M\times \R^{>0}$ such that the exists a connection $\nabla$ on $U$ such that $\left(U,\nabla, g,s\ds\right)$ is a selfsimilar Hessian manifold. Then we can extend $\nabla$ to a connection $\tilde\nabla$ on $\pi (U) \times \R^{>0}$ such that $(\pi (U) \times \R^{>0}, \tilde\nabla, g)$ is a global selfsimilar Hessian manifold.
\end{lemma}

\begin{proof}
	For any $q\in \R^{>0}$ consider a map $\lambda_q: M\times \R^{>0} \to M \times \R^{>0}$ defined by  $\lambda_q (x\times s)=x\times qs$. Then the flow along the vector field $s\ds$ during the time $\ln q$ coincides with the map $\lambda_q$. By Proposition \ref{go}, for any $x\in U$ and $q\in \R^{>0}$ satisfying $\lambda_q x \in U$ we have 
	$$
	(\lambda_q^* \nabla )(x) =\nabla (x).
	$$
	We can extend $\nabla$ to a connection $\tilde\nabla$ on $\pi(U)\times \R^{>0}$ such that 
	$$
	{\left(\pi(U) \times \R^{>0},\tilde\nabla,s\ds\right)}
	$$ is a radiant manifold using the same argument as in the proof of Lemma \ref{L3}. Since $\tilde g$ is selfsimilar
	$(\lambda_q(U), \nabla,\tilde g)$ is isometric to $(U,q^2\tilde g)$. 
	The restriction $\tilde g|_U$ is Hessian therefore $\tilde g|_{\lambda_q(U)}$ is Hessian. Thus, $\tilde g$ is a Hessian metric and ${(\pi(U) \times \R^{>0},\tilde\nabla,g,s\ds)}$ is a global selfsimilar Hessian manifold.
\end{proof}

\begin{proposition} \label{39} 
		Any selfsimilar Hessian manifold is locally isomorphic to a global selfsimilar Hessian manifolds.

\end{proposition}

\begin{proof}

	Let $(C,\nabla,g,\xi)$ be a selfsimilar Hessian manifold. By Lemma \ref{L3}, the selfsimilar manifold $\left(C,g,\xi\right)$ is locally isomorphic to a part of a global selfsimilar manifold $\left(M\times\R^{>0},\tilde g,s\ds\right)$. By Lemma \ref{l0}, the connection $\nabla$ on $C\subset M\times\R^{>0}$ can be extended to a connection $\tilde \nabla$ on $M\times \R^{>0}$ such that $\left(M\times \R^{>0},\nabla,g,s\ds\right)$ is a global selfsimilar Hessian manifold. 
\end{proof}

\begin{defin}
	A cone $V\subset \R^{n+1}$ is an open subset of $\R^{n+1}$ such that if ${x=(x^1,\ldots ,x^{n+1})\in V}$ then for any $t\in \R^{>0}$ we have $tx=(tx^1,\ldots,tx^{n+1}) \in V$. A cone is called {\bfseries regular} if it does not contain any full straight line. 
\end{defin}

    Let $V\subset \R^{n+1}$ be a regular cone. Then we can obtain that $V\in \R^{n}\times \R^{>0}$. Further, we will consider any regular cone $V$ as a subset $V\subset \R^{n}\times\R^{>0}$.  
    
\begin{defin}
    A {\bfseries conical Hessian domain} $(V,g)$ is a regular cone $V\subset\R^{n}\times\R^{>0}$ endowed with a standard affine connection from $\R^{n+1}$ and a Hessian metric $g$ such that 
    $$
    \L_\rho  g = 2\lambda^{-1}g,
    $$
    where $\lambda\in \R\setminus\{0\}$ and 
    $$
    \rho= \sum_{i=1}^{n+1} x^i \frac{\partial}{\partial x^i}
    $$
    is the {\bfseries radiant vector field}.
\end{defin}
	Let $\nabla$ be a standard affine connection on $\R^{n+1}$  then the field $\xi=\lambda\rho$ satisfies $\L_\xi g =2g$ and $\nabla \xi =\lambda \text{Id}.$ Hence, $\left(V,\nabla,g,\xi\right)$ is a radiant selfsimilar Hessian manifold. 

\begin{proposition}\label{316}
A selfsimilar Hessian manifold with a nowhere vanishing homothetic field is locally isomorphic to a conical Hessian domain.
\end{proposition}

\begin{proof}
	Let $\left(C,\nabla,g,\xi\right)$ be a selfsimilar manifold and $\xi$ is nowhere vanishing. The collection $(C,\nabla,\xi)$ is locally isomorphic to a collection $(U,\nabla', \lambda\sum_{i=1}^{n+1} x^i\dxi)$, where 
	${U\subset\R^{n+1}}$, 
	$\nabla'$ is a standard flat connection on $\R^{n+1}$ and 
	$x^1,\ldots,x^{n+1}$ are coordinates on $\R^{n+1}$ (see Definition \ref{ddd}). The neighborhood $U\in \R^{n+1}$ contains a convex bounded domain $U'$. Then 
	$$
	V:=\bigcup_{t\in \R^{>0}} tU'.
	$$ 
	is a regular cone. By Lemma \ref{L3}, we can extend $g$ to a selfsimilar metric $\tilde g$ on $V$. 
	The collection $\left(U,\nabla', \tilde g\right)$ is isomorphic to $\left(tU,\nabla', t^{-2}\tilde g\right)$. Hence, the metric $t^{-2}\tilde g|_{tU}$ is Hessian. Therefore, $g|_{tU}$ is Hessian. The same is true for any $t\in \R^{>0}$. Thus, $\tilde g$ is a Hessian metric on $V$ and $(V,\tilde g)$ is an affine Hessian cone.
	
\end{proof}

\begin{cor}
	A selfsimilar manifold is locally isomorphic to an affine Hessian cone near a point where the homothetic vector field does not vanish. 
\end{cor}

 \begin{proposition}\label{317}
	Let $(C,g)$ be a conical Hessian domain, $\lambda$ as above,
	$$
	\varphi = \frac{g(\xi,\xi)}{4-2\lambda}.
	$$
	and
	$$
	\psi=\varphi\left/\left(x^{n+1}\right)^{2\lambda^{-1}}\right.. 
	$$
	Then
	$$
	g=\text{Hess}\ \varphi = \text{Hess} \left(\left(x^{n+1}\right)^{2\lambda^{-1}} \psi\right)
	$$
	and $\psi$ is constant along the radiant vector field 
	$$
	\rho =
\sum_{i=1}^{n+1} x^i \dxi. 
	$$ 
	
\end{proposition}
\begin{proof} 
	By Proposition \ref{311}, $g=\text{Hess} \ \varphi$. We have 
	$$
	\rho\left(g(\rho,\rho)\right) =\L_\rho g (\rho,\rho)=2\lambda^{-1}g (\xi,\xi).
	$$
	Therefore, $\xi \varphi =\lambda^{-1}\varphi$. Using this, we get 
	$$
	\rho \left(\frac{\varphi}{\left(x^{n+1}\right)^{2\lambda^{-1}}}\right)=
	\frac{2\lambda^{-1}\varphi \left(x^{n+1}\right)^{2\lambda^{-1}}-\varphi \left(\sum_{i=1}^{n+1} x^i\dxi\right) \left(\left(x^{n+1}\right)^{2\lambda^{-1}}\right)}{\left(x^{n+1}\right)^{4\lambda^{-1}}}=0.
	$$
	
\end{proof}

	  \begin{proposition}\label{l33}
	 	Let $V\subset \R^{n}\times\R^{>0}$, ${U=V\cap \{x^{n+1}=1\}}$, $\rho=\sum_{i=1}^{n+1}x^i\dxi$, $\psi$ be a constant along $\rho$ function, $\lambda\in \R \setminus\{0,2\}$. Then the bilinear form  $\text{Hess} \left(\left(x^{n+1}\right)^{2\lambda^{-1}} \psi\right)$ is positive definite  if and only if at any point we have
	 	\begin{equation} \label{eq}
	 	\left(4\lambda^{-2}-2\lambda^{-1}\right)\psi>0  \ \ \ \text{and} \ \ \ X^2  \psi>\max \left(\left(2-4\lambda^{-1}\right)X\left(\psi\right)-\left(4\lambda^{-2}-2\lambda^{-1}\right)\psi ,0 \right),
	 	\end{equation}
	\end{proposition}
	
	\begin{proof}
		the bilinear form $g$ is positive definite if and only if for any flat nonzero field $X\in TU$ we have 
		$$
		g(X,X)>0, \ \  g(\rho,\rho)=0 , \ \ \text{and} \ \  g(X+\rho,X+\rho)>0.
		$$
		The first inequality is satisfied if and only if $X^2\psi>0$. 
		We have,
		$$
		g(\rho,\rho)=\rho^2\left(\left(x^{n+1}\right)^{2\lambda^{-1}} \psi\right)-\nabla_\rho \rho\left(\left(x^{n+1}\right)^{2\lambda^{-1}} \psi\right).
		$$
		We have $\nabla_\rho =\rho$, $\rho \psi=0$ and $\rho\left(x^{n+1}\right)=x^{n+1}$. Hence,
		$$
		g(\rho,\rho)=\left(4\lambda^{-2}-2\lambda^{-1}\right)\left(x^{n+1}\right)^{2\lambda^{-1}}\psi.
		$$  
		Therefore, $g(\rho,\rho)>0$ if and only if
		$$
		\left(4\lambda^{-2}-2\lambda^{-1}\right)\psi>0.
		$$

		We have
		\begin{equation}\label{36}
		g(X,\rho)=X\rho\left(\left(x^{n+1}\right)^{2\lambda^{-1}} \psi\right)-\nabla_X \rho\left(\left(x^{n+1}\right)^{2\lambda^{-1}} \psi\right)=X\rho\left(\left(x^{n+1}\right)^{2\lambda^{-1}} \psi\right)- X\left(\left(x^{n+1}\right)^{2\lambda^{-1}} \psi\right).
		\end{equation}
		Since $\rho(\psi)=0$ and $\rho\left(x^{n+1}\right)=x^{n+1}$, we have
		$$
		\rho\left(\left(x^{n+1}\right)^{2\lambda^{-1}}\psi\right)=\psi\rho\left(\left(x^{n+1}\right)^{2\lambda^{-1}}\right)=\psi \sum_{i=1}^{n+1}x^i\dxi \left(\left(x^{n+1}\right)^{2\lambda^{-1}}\psi\right)=2\lambda^{-1}\left(x^{n+1}\right)^{2\lambda^{-1}}\psi.
		$$ 
		Combining this with \ref{36}, we get
		$$
		g(X,\rho)=2X\left(\lambda^{-1}\left(x^{n+1}\right)^{2\lambda^{-1}}\psi\right)-X\left(\left(x^{n+1}\right)^{2\lambda^{-1}}\psi\right)=\left(2\lambda^{-1}-1\right)\left(x^{n+1}\right)^{2\lambda^{-1}}X\left(\psi\right).
		$$
		Therefore,
		$$
		g(X+\rho,X+\rho)=g(X,X)+2g(X,\rho)+g(\rho,\rho)=
		$$
		$$
		=\left(x^{n+1}\right)^{2\lambda^{-1}}\left(X^2+\left(4\lambda^{-1}-2\right)X+4\lambda^{-2}-2\lambda^{-1}\right)(\psi)>0.
		$$
		Since $x^{n+1}\in \R^{>0}$, we have $g(X+\rho,X+\rho)>0$ if and only if
		$$
		\left(X^2+\left(4\lambda^{-1}-2\right)X+4\lambda^{-2}-2\lambda^{-1}\right)(\psi)>0.
		$$
		Thus, $g$ g is positive definite if and only if \eqref{eq} holds.
		
	\end{proof}

\begin{theorem}\label{TTT}
		Let $V\subset \R^{n}\times \R^{>0}$ be a regular cone then $(V,g)$ is a affine Hessian cone if and only if 
		$$
		g=\text{Hess} \left(\left(x^{n+1}\right)^{2\lambda^{-1}}\psi\right),
		$$ 
		where 
		%	$$
		%\begin{aligned}
		%a_{i,j}&=\left(x^{n+1}\right)^{2\lambda^{-1}}\frac{\d^2 \psi}{\d x^i \d x^j},	\\
		%a_{j,n+1}&=\left(x^{n+1}\right)^{2\lambda^{-1}-1}\left(\frac{\d \psi}{\d x^j}-\sum x^{i}  \frac{\d \psi}{\d x^{i} \d x^{j}}\right), \\
		%a_{n+1,n+1}&=\left(x^{n+1}\right)^{2\lambda^{-1}-2}\left(2\psi-\sum x^i\frac{\d \psi}{ \d x^i}\right),
		%\end{aligned}
		%$$
        $\psi$ is a function constant along the radiant vector field such that
		\begin{equation*}
	 \left(4\lambda^{-2}-2\lambda^{-1}\right)\psi>0 \ \ \ \text{and} \ \ \ X^2  \psi>\max \left(\left(2-4\lambda^{-1}\right)X\left(\psi\right)-\left(4\lambda^{-2}-2\lambda^{-1}\right)\psi ,0 \right), 
		\end{equation*}
		for any nonzero flat vector field $X=\sum_{i=1}^n b_i \dxi\in TU$, $b_i\in \R$ where $U=V\cup\{x^{n+1}=1\}.$
	\end{theorem}
	\begin{proof}
		Theorem \ref{TTT} follows from propositions \ref{317} and \ref{l33}.  
	\end{proof}
\begin{cor}\label{cor}
	There exists a radiant Hessian manifold with any $\lambda \ne 0,2$.  
\end{cor}
	\subsubsection{The case of a point where a homothetic vector field vanishes}\label{322}
	\begin{proposition}\label{pppp}
		Let $(C,\nabla,g,\xi)$ be a radiant selfsimilar Hessian manifold and $p\in C$ a point such that $\xi_p=0$. Then, near the point $p$, $(C,\nabla,g,\xi)$ is isomorphic to a part of selfsimilar Hessian manifold of the form $\left(\R^{n+1}, g,\lambda\sum_{i=1}^{n+1} x^i\dxi\right)$ with a standard affine connection. 
	\end{proposition}
	\begin{proof}
		By Definition \ref{ddd}, there exists a constant $\lambda$ such that $\rho=\lambda^{-1}\xi$ is a radiant vector field. Then an affine manifold $C$ with a field $\rho$ is isomorphic to a star neighborhood $U$ of $\R^{n+1}$ with a field $\sum_{i=1}^{n+1} x^i\dxi$ near a point $p$. Hence, $\xi$ is isomorphic to $\lambda \sum_{i=1}^{n+1} x^i\dxi$. Using Lemma \ref{L3}, for the manifold $\R^{n+1}\setminus \{0\}=S^{n}\times \R^{>0}$, we can be extended $g|_U$ to a selfsimilar metric $\tilde g$ on $\R^{n+1}$. The metric $\tilde g$ is Hessian by the same argument as in the prove of Proposition \ref{316}.
	\end{proof}
	\begin{proposition}\label{324}
		Let $\left(\R^{n+1}, g,\xi=\lambda\sum_{i=1}^{n+1} x^i\dxi\right)$ be a selfsimilar Hessian manifold,  
		$$
		r=\left(\sum_{i=1}^{n+1}\left(x^i\right)^2\right)^{\frac{1}{2}}, \ \ \ \text{and} \ \ \ \psi=\frac{g(\xi,\xi)}{(4-2\lambda)r^{2\lambda^{-1}}}.
		$$ 
		Then then $\psi$
		is constant along the radiant vector field $\rho=r\dr$ and 
		$$g=\text{Hess} \left(r^{2\lambda^{-1}} \psi\right).$$
	\end{proposition}
 	\begin{proof}
 		By Proposition \ref{311} 
 		$
 		g=g=\text{Hess} \left(r^{2\lambda^{-1}} \psi\right).
 		$
 		The function $\psi$ is constant along $\xi$ by the same argument as in Proposition \ref{317}.
 	\end{proof}
 	\begin{theorem} \label{t1}
 		Let $\left(C,\nabla,g,\xi\right)$ be a radiant Hessian manifold with vanishing at a point homothetic field $\xi$. Then $\nabla \xi =\text{Id}$.  
 	\end{theorem} \begin{proof}
 	According to Proposition \ref{pppp}, near a point where the homothetic vector field vanishes $(C,\nabla,g,\xi)$ is locally isomorphic to $\left(\R^{n+1}, \tilde g,\tilde \xi=\lambda \sum_{i=1}^{n+1} x^i\dxi\right)$. Let $r$ be as above. According to Proposition \ref{324}, there exist a constant along $\rho$ function $\psi$ such that $$g=\text{Hess} \frac{r^{2\lambda^{-1}} \psi}{4-2\lambda}.$$ We have 
 	$$
 	g\left(\dr,\dr \right)=\frac{\d^2}{\d r^2}  \frac{r^{2\lambda^{-1}} \psi}{4\lambda^{-1}-2} - \nabla_\dr \dr  \frac{r^{2\lambda^{-1}} \psi}{4\lambda^{-1}-2}=\frac{\d^2}{\d r^2}  \frac{r^{2\lambda^{-1}} \psi}{4\lambda^{-1}-2}=4\lambda^{-1} {r^{2\lambda^{-1}-2} \psi}.
 	$$  
 	Since $g\left(\dr,\dr\right)$ is defined and does not equal $0$, we have ${2\lambda^{-1}-2}=0$. Therefore, $\lambda=1$ and $\nabla \xi=\text{Id}$ near a point $p$. By Proposition \ref{pg}, $\nabla\nabla \xi=0$. Therefore, $\nabla \xi=\text{Id}$ on $C$.
 \end{proof}
\begin{lemma}\label{l37}
	Let $\left(C,\nabla,g,\xi\right)$ be a selfsimilar Hessian manifold such that $\nabla \xi=\text{Id}$. Then $\nabla_\xi g= 0$.
\end{lemma}
 \begin{proof}
 	According to Theorem \ref{t1}, we have $\nabla \xi=\text{Id}$. For any $X,Y \in T\R^n$ we have
 	$$
 	\left(\nabla_\rho g\right)=\rho \left(g(X,Y)\right)- g(\nabla_\rho X,Y)-g(X,\nabla_\rho Y)
 	$$
 	Combing this with
 	$$
 	2g(X,Y)=\L_\rho g(X,Y)= \rho\left(g(X,Y)\right)- g([\rho, X],Y)-g(X,[\rho, Y]),
 	$$
 	we get
 	\begin{multline*}
 	\nabla_\rho g(X,Y)=2g(X,Y)+ g([\rho, X],Y)+g(X,[\rho, Y]) -  g(\nabla_\rho X,Y)-g(X,\nabla_\rho Y)= \\
 	=2g(X,Y)-g(\nabla_X \xi,Y)-g(X,\nabla_Y \xi)=0.
 	\end{multline*}

 \end{proof}
%\begin{theorem}
%	Let $\left(C,\nabla,g,\xi\right)$ be a selfsimilar Hessian manifold.  $\Phi_t=\exp(-t\xi)$ the flow along the vector field $-\xi$. Suppose that the field $\xi$ vanishes at a point $p\in C$.
%	Let $V$ be a set of point $q$ satisfying: 
%	\begin{itemize}
%		\item [(i)] $\Phi_t(q)$ is defined for any $t\in[0,\infty)$.
%		\item [(ii)] $\underset{t\to\infty}\lim\Phi_t(q)=p. $
%	\end{itemize}
%	Then $V$ is an open neighborhood of $p$
%\end{theorem}

%\begin{proof}
%	Combining Lemma \ref{l37} and Theorem \ref{t1}, we get that $\nabla_\rho g=0$. Let $q\in V$ and $\gamma$ be tangent to $\xi$ paths connecting $q$ with $p$. Then we can get $g|_{q}$ as a result of the parallel translation of $g|_{p}$ along $\gamma$. The connection $\nabla$ is flat and for any point $q$ the form $g|_q$ arises as a result of parallel translation from the point $p$. Therefore $g$ is $\nabla$-flat on $V$. 
%\end{proof}

 	 \begin{proposition}\label{326}
 	Let $\nabla$ be a standard affine connection on $\R^{n+1}$. If $\left(\R^{n+1},\nabla,g,\lambda\sum_{i=1}^{n+1}x_i\dxi\right)$ is selfsimilar Hessian manifold then the metric $g$ is $\nabla$-flat. 
 \end{proposition}
	Note that the Riemannian flatness of $g$ follows from Theorem \ref{T_1}. However, the $\nabla$-flatness of $g$ a priori does not follow from the Riemannian flatness. In fact, for an arbitrary Hessian (not selfsimilar) manifold $\left(M,\nabla,g\right)$, the metric $g$ may be Riemannian flat but not $\nabla$-flat.
	\begin{example}
		Consider an affine space $\R^n$ with a Hessian metric 
		$$
		g=\text{Hess}\left(\sum_{i=1}^n e^{x^i}\right)=\sum_{i=1}^n e^{x^i}\left(dx^i\right)^2.
		$$
		The set 
		$
		\left\{y^i=2e^{x^i/2}\right\}
		$
		is a coordinate system. Then
		$$
		g=\sum_{i=1}^n e^{x^i}\left(dx^i\right)^2=\sum_{i=1}^n \left(d\left(2e^{x^i/2}\right)\right)^2=\sum_{i=1}^n \left(dy^i\right)^2.
		$$
		Therefore, the Riemannian curvature of $g$ is equal to $0$. That is, $g$ is Riemannian flat.
	\end{example}

\begin{proof}[Proof of Proposition \ref{326}]
 According to Theorem \ref{t1}  we have $\lambda=1$. According to Lemma \ref{l37}, $\nabla_{\sum_{i=1}^{n+1} x^i\dxi} g = 0.$ Therefore, for any $x\in\R^{n+1}$ the metric $g$ is constant on the orbit of $x$ along the field $\xi$. That is, $g$ is constant on the set $\gamma_x=\{tx\in \R^{n+1}| t\in \R^{>0}\}$ i.e. if $g=\sum c_{i,j}dx^idx^j$ then all function $c_{i,j}$ are constant on $\gamma_x$. Moreover, $g$ is continuous. Therefore, $g$ is constant on the closing $\overline \gamma_x=\gamma_x\cup\{0\}$. Hence, for any $x\in \R^{n+1}$ we have $g(0)=g(x)$. Therefore $g$ is $\nabla$-flat.
\end{proof}

\begin{proof}[Proof of Theorem \ref{T4}]
	(i) Combining propositions \ref{pppp} and \ref{326}, we get that, near the point $p$ the radiant selfsimilar Hessian manifolds is isomorphic to $\left(\R^{n+1},\nabla,g,\rho\dxi\right)$, where $\nabla$ is a standard flat connection on $\R^n$, $\rho$ is the radiant vector field, and $g$ is $\nabla$-flat. Then in a proper affine coordinate system $\left(x^1\ldots x^n\right)$ we have $g=\sum_{i=1}^{n+1}\left(dx^i\right)^2$. The radiant vector field has the same form in any flat coordinates $\rho=\sum_{i=1}^{n+1} x^i\dxi$.

	(ii) It follows from Proposition \ref{316} and Theorem \ref{TTT}.
\end{proof}

		\paragraph*{Acknowledgements.} I would like to thank M. Verbitsky for fruitful discussions, and {D.V. Alekseevsky}, for his useful comments and help with preparation of the paper.

\end{document}